\titleformat{\subsection}[runin]
{\bfseries} {\thesubsection{.}}{0.15cm}{}[.]
\titleformat{\subsubsection}[runin]
{\em}{\thesubsubsection{.}}{0.15cm}{}[.]
\newtheorem{theorem}{Theorem}[section]
\newtheorem{lemma}[theorem]{Lemma}
\newtheorem{corollary}[theorem]{Corollary}
\theoremstyle{definition}
\newtheorem{problem}[theorem]{Problem}
\numberwithin{equation}{section}
\numberwithin{figure}{section}
\newcommand\Ascr{\mathscr{A}}
\newcommand\Cscr{\mathscr{C}}
\newcommand\Tscr{\mathscr{T}}
\renewcommand\b{\mathbb{B}}
\renewcommand\c{\mathbb{C}}
\renewcommand\d{\mathbb D}
\newcommand\n{\mathbb{N}}
\renewcommand\r{\mathbb{R}}
\newcommand\z{\mathbb{Z}}
\newcommand\sfb{\mathsf{b}}
\newcommand\igot{\mathfrak{i}}
\renewcommand\igot{\mathfrak{i}}
\newcommand\Fgot{\mathfrak{F}}
\renewcommand\imath{\igot}
\newcommand\hra{\hookrightarrow}
\newcommand\wt{\widetilde}
\newcommand\dist{\mathrm{dist}}
\newcommand\length{\mathrm{length}}
\def\dist{\mathrm{dist}}
\def\length{\mathrm{length}}
\begin{document}


\fancyhead[LO]{Complete embedded complex curves in the ball of $\c^2$ can have any topology}
\fancyhead[RE]{A.\ Alarc\'on and J.\ Globevnik}
\fancyhead[RO,LE]{\thepage}

\thispagestyle{empty}


\vspace*{1cm}
\begin{center}
{\bf\LARGE 
Complete embedded complex curves in the ball of $\c^2$ can have any topology
}

\vspace*{0.5cm}

%
%
{\large\bf Antonio Alarc\'on \; and \; Josip Globevnik}
\end{center}


%
%
\vspace*{1cm}

\begin{quote}
{\small
\noindent {\bf Abstract}\hspace*{0.1cm}
In this paper we prove that the unit ball $\b $ of $\c^2$ admits complete properly embedded complex curves of any given topological type. Moreover, we provide examples containing any given closed discrete subset of $\b $.

\vspace*{0.1cm}

\noindent{\bf Keywords}\hspace*{0.1cm} Complex curves, holomorphic embeddings, complete bounded submanifolds.

\vspace*{0.1cm}

\noindent{\bf Mathematics Subject Classification (2010)}\hspace*{0.1cm} 32H02, 32B15, 32C22.
}
\end{quote}


\section{Introduction} 
\label{sec:intro}

In 1977 Yang raised the question whether there exist complete bounded complex submanifolds of a complex Euclidean space $\c^N$ $(N>1)$; see \cite{Yang1977,Yang1977JDG}. Recall that an immersed $k$-dimensional complex submanifold $\psi\colon M^k\to\c^N$ $(1\le k<N)$ is said to be {\em complete} if the Riemannian metric induced on $M$ by the Euclidean metric in $\c^N$ via $\psi$ is complete in the classical sense; equivalently, if the image by $\psi$ of any divergent path on $M$ has infinite Euclidean length. 

The case of main interest to us in this paper is when $k=1$ and $N=2$; i.e., complex curves in the complex Euclidean plane $\c^2$. There is a lot of known examples of complete bounded {\em immersed} complex curves in $\c^2$ 
which have been obtained by different methods; see Jones \cite{Jones1979PAMS} for discs, Mart\'in, Umehara, and Yamada \cite{MartinUmeharaYamada2009PAMS} for some finite topologies, Alarc\'on and L\'opez \cite{AlarconLopez2013MA} for examples with arbitrary topology, and Alarc\'on and Forstneri\v c \cite{AlarconForstneric2013MA} and Alarc\'on, Drinovec Drnov\v sek, Forstneri\v c, and L\'opez \cite{AlarconDrinovecForstnericLopez2015PLMS} for examples normalized by any given bordered Riemann surface. 
Furthermore, the curves in \cite{AlarconLopez2013MA,AlarconForstneric2013MA,AlarconDrinovecForstnericLopez2015PLMS} can be chosen proper in any given convex domain of $\c^2$; in particular, in the open unit Euclidean ball, which throughout this paper will be denoted by $\b$. 

On the other hand, using the techniques developed in the cited sources and taking into account that the general position of complex curves in $\c^N$ is embedded for all $N\ge 3$, it is not very hard to construct complete bounded {\em embedded} complex curves in $\c^N$ for any such $N$ (see again \cite{AlarconForstneric2013MA,AlarconDrinovecForstnericLopez2015PLMS}). Also in this line but for submanifolds of higher dimension, Alarc\'on and Forstneri\v c provided in \cite{AlarconForstneric2013MA} examples of complete bounded embedded $k$-dimensional complex submanifolds of $\c^{3k}$ for any $k\in\n$, whereas Drinovec Drnov\v sek proved in \cite{Drinovec2015JMAA} that every bounded, strictly pseudoconvex, smoothly bounded domain of $\c^k$ admits a complete proper holomorphic embedding into the unit ball of $\c^N$ provided that the codimension $N-k$ is sufficiently large.

However, constructing complete bounded {\em embedded} complex curves in $\c^2$ (and, more generally, complete bounded embedded complex hypersurfaces of $\c^N$ for $N>1$) is a much more arduous task; the main reason why is that {\em self-intersections of complex curves in $\c^2$} 
{\em are stable under small deformations}. It is therefore not surprising that the first known examples of 
such curves were only found almost four decades after Yang posed his question; they were Alarc\'on and L\'opez who gave them in \cite{AlarconLopez2016JEMS}. Their method, which in fact furnishes complete properly embedded complex curves in any convex domain of $\c^2$, is rather involved and relies, among other things, on a subtle self-intersection removal procedure that does not allow to infer any information on the topological type of the examples
. A little later Globevnik \cite{Globevnik2015AM,Globevnik2016MA}, using a different technique, extended the results in \cite{AlarconLopez2016JEMS} by proving the existence of complete properly embedded complex hypersurfaces in any pseudoconvex domain $D$ of $\c^N$ for any $N>1$; this settles the {\em embedded Yang problem} in an optimal way in all dimensions. 
His examples are given as level sets of highly oscillating holomorphic functions $D\to\c$, and hence, again, no information on their topology 
 is provided. In light of the above, the following questions naturally appear (cf.\ \cite[Question 1.5]{AlarconLopez2016JEMS} and \cite[Questions 13.1 and 13.2]{Globevnik2015AM}):
\begin{problem}\label{pro:topology}
Is there any restriction on the topology of a complete bounded embedded complex hypersurface of $\c^N$? What if $N=2$? 
For instance, do there exist complete proper holomorphic embeddings of the unit disk $\d\subset\c$ into the unit ball $\b\subset\c^2$?
\end{problem}

The first approach to this problem was recently made in \cite{AlarconGlobevnikLopez2016Crelle} by Alarc\'on, Globevnik, and L\'opez, who, with a conceptually new method based on the use of holomorphic automorphisms of $\c^N$, constructed complete closed complex hypersurfaces in the unit ball of $\c^N$ for any $N>1$ with certain control on their topology. In particular, for $N=2$, they showed that
{\em the unit ball of $\c^2$ admits complete properly embedded complex curves with arbitrary {\em finite} topology} (see \cite[Corollary 1.2]{AlarconGlobevnikLopez2016Crelle}), thereby affirmatively answering the third question in Problem \ref{pro:topology}. Going further in this line, Globevnik proved in \cite{Globevnik2016JMAA} the existence of complete proper holomorphic embeddings $\d\hra\b$ whose image contains any given closed discrete subset of $\b$. This is reminiscent to an old result by Forstneri\v c, Globevnik, and Stens{\o}nes asserting that, given a pseudoconvex Runge domain $D\subset\c^N$ $(N>1)$ and a closed discrete subset $\Lambda\subset D$, there is a proper holomorphic embedding $\d\hra D$ whose image contains $\Lambda$ (see \cite{ForstnericGlobevnikStensones1996MA}); it is nevertheless true that these embeddings are not ensured to be complete in any case. 

The aim of this paper is to settle Problem \ref{pro:topology} for $N=2$ by proving the existence of complete properly embedded complex curves in $\b$ with {\em arbitrary topology} (possibly infinite). This completely solves the problem and, in particular, generalizes Alarc\'on, Globevnik, and L\'opez's existence result \cite[Corollary 1.2]{AlarconGlobevnikLopez2016Crelle} which only deals with finite topological types. Moreover, we provide examples of such curves which contain any given closed discrete subset of $\b$,  thereby extending the above mentioned hitting result by Globevnik (see \cite[Theorem 1.1]{Globevnik2016JMAA}).

The main theorem of this paper can be stated as follows.
%
%
\begin{theorem}\label{th:main-intro}
Let $\Lambda$ be a closed discrete subset of the unit ball $\b \subset\c^2$. On each open connected orientable smooth surface $M$ there exists a complex structure such that the open Riemann surface $M$ admits a complete proper holomorphic embedding $M\hra\b$ whose image contains $\Lambda$.
%
\end{theorem}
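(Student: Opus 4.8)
The plan is to build the embedding as a limit of an exhausting sequence of proper holomorphic embeddings of bordered Riemann surfaces, combining two independent approximation engines: the automorphism-based technique of Alarc\'on, Globevnik, and L\'opez \cite{AlarconGlobevnikLopez2016Crelle} (which produces completeness together with control of the topology but only for finite topological types) and a careful inductive exhaustion that allows the topology of the limit surface to grow without bound. First, I would fix a smooth exhaustion $M_1\Subset M_2\Subset\cdots$ of the given surface $M$ by connected, smoothly bounded, relatively compact domains such that each inclusion $M_j\hookrightarrow M_{j+1}$ is either a deformation retract or adds exactly one handle or one end; simultaneously I would enumerate $\Lambda=\{p_1,p_2,\dots\}$ (if infinite) and a diverging sequence of radii $\rho_j\uparrow 1$. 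The goal of step $j$ is to produce a complex structure on (a neighborhood of) $M_j$ and a holomorphic embedding $F_j\colon \overline{M_j}\hookrightarrow\b$ that: (i) is close to $F_{j-1}$ on $\overline{M_{j-1}}$ in the fine $\Ccal^1$-topology; (ii) maps $\overline{M_j}$ into $\b$ with $F_j(\di M_j)$ outside the ball of radius $\rho_j$ (properness); (iii) has intrinsic boundary distance from $F_{j-1}(\overline{M_{j-1}})$ to $\di M_j$ larger than $1$ (completeness, as in the classical Jorge--Xavier/Nadirashvili-style estimate); and (iv) realizes the prescribed points: $p_1,\dots,p_{k(j)}\in F_j(\overline{M_j})$ with $k(j)\to\infty$.

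\smallskip

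The core of each step is an application of the results quoted in the excerpt. Since each $M_j$ has \emph{finite} topology, I can invoke the machinery behind \cite[Corollary 1.2]{AlarconGlobevnikLopez2016Crelle} and \cite{Globevnik2016JMAA}, which supplies, from a given embedded bordered Riemann surface in $\b$, a new one with controlled topology, increased intrinsic boundary distance, boundary pushed toward $b\b$, and passing through a prescribed finite set — all while staying uniformly $\Ccal^1$-close to the previous map on a fixed compact subset. The passage from $M_{j-1}$ to $M_j$ has three ingredients to chain together: (a) a \emph{topology-changing} step, where I attach the handle or end as a thin embedded tube in $\b$, extending $F_{j-1}$ over $M_j\setminus M_{j-1}$ (this uses that general position for curves in $\c^3$ is embedded, then a projection/desingularization as in \cite{AlarconForstneric2013MA,AlarconDrinovecForstnericLopez2015PLMS}; the tube can be made as short and as close to $b\b$ as desired so as not to destroy the estimates); (b) a \emph{hitting} step, deforming the new map slightly so that its image absorbs the next point $p_{k}$, using the interpolation/hitting technique of \cite{Globevnik2016JMAA,ForstnericGlobevnikStensones1996MA}; and (c) the \emph{completeness-and-properness} step proper, the automorphism-based labyrinth argument of \cite{AlarconGlobevnikLopez2016Crelle}, which raises the intrinsic diameter by more than $1$ and pushes $\di M_j$ past the sphere of radius $\rho_j$ while keeping the embedding and staying $\Ccal^1$-close on $\overline{M_{j-1}}$.

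\smallskip

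Once the sequence $(F_j)$ is produced with the four properties above and the $\Ccal^1$-distances summable, I would take $F=\lim_j F_j$, which exists and is holomorphic on $M^\circ:=\bigcup_j M_j$ by a Cauchy-type argument; the complex structure on $M$ is the one induced in the limit (it is the direct limit of the structures on the $M_j$, all diffeomorphic to the given smooth exhaustion of $M$, so $M^\circ\cong M$ as smooth surfaces). Standard arguments then give: $F$ is an injective immersion because each $F_j$ is an embedding and the $\Ccal^1$-closeness is strong enough to propagate injectivity to the limit (here one must argue a little carefully, localizing: on each fixed $\overline{M_i}$, $F$ is a $\Ccal^1$-limit of embeddings that are \emph{eventually fixed} up to tiny perturbations, and the perturbations on the newly added pieces happen far out near $b\b$, so no new self-intersections are created — this is precisely the delicate bookkeeping that makes the embedded case hard); $F$ is proper into $\b$ because of (ii) applied with $\rho_j\uparrow1$; $F$ is complete because of (iii), which forces every divergent path to have infinite length; and $F(M)\supset\Lambda$ because of (iv) and $k(j)\to\infty$. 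The main obstacle, as signalled in the introduction, is ingredient (a) combined with the injectivity-in-the-limit argument: attaching infinitely many handles/ends while \emph{preserving embeddedness} requires that each topology-changing surgery be performed in a region so close to $b\b$ (and so thin) that it cannot interfere — now or at any later stage — with the already-constructed part of the surface, and that the subsequent automorphism deformations, which act globally, never drag two sheets together. Controlling this interference uniformly over the infinite induction, i.e.\ choosing the radii $\rho_j$, the tube thicknesses, and the closeness parameters in the right order of quantifiers, is where the real work lies; the completeness and hitting parts are, by now, essentially off-the-shelf given \cite{AlarconGlobevnikLopez2016Crelle} and \cite{Globevnik2016JMAA}.
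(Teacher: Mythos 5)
Your overall architecture --- an exhaustion by finite-topology pieces whose successive Euler characteristic jumps are $0$ or $-1$, a hitting step via Globevnik's automorphism interpolation \cite{Globevnik2016JMAA}, a completeness step via the tangent-ball labyrinths and avoiding automorphisms of \cite{AlarconGlobevnikLopez2016Crelle}, and the principle that all new topology must be created in a thin spherical shell that cannot interfere with the part already built --- coincides with the paper's strategy, and the limit/bookkeeping issues you flag are resolved there by a nested choice of radii $s_{j-1}'<r_j<r_j'<s_j<s_j'$ together with the requirement that the image of each newly added piece avoid $s_{j-1}'\overline\b$.

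There is, however, one genuine gap, and it sits exactly at the point you yourself identify as the crux. Your ingredient (a) proposes to attach each handle or end by putting the curve in general position in $\c^3$ and then performing a ``projection/desingularization as in \cite{AlarconForstneric2013MA,AlarconDrinovecForstnericLopez2015PLMS}.'' This cannot work here: those techniques yield complete bounded \emph{immersed} curves in $\c^2$ (embeddings only in $\c^N$ for $N\ge 3$), and, as the introduction emphasizes, self-intersections of complex curves in $\c^2$ are \emph{stable} under small deformations, so no generic projection or perturbative desingularization is available in $\c^2$. The tool that actually makes the topology-changing step compatible with embeddedness is the Forstneri\v c--Wold technique for bordered Riemann surfaces in $\c^2$ \cite{ForstnericWold2009JMPA}, in the form of \cite[Theorem 4.5]{AlarconLopez2013JGA} (Lemma~\ref{lem:AL}): given an embedded bordered curve of class $\Ascr^1$ with boundary outside $s\overline\b$, it returns a \emph{properly embedded} curve in $\c^2$ of the prescribed finite topology, close to the original on the old compact piece and with all of the new part outside $s\overline\b$. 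Concretely, one first attaches the handle as a smooth embedded arc $\gamma$ with endpoints on $\sfb D_{j-1}$, extends the map smoothly and injectively over $\gamma$ inside the shell $r_j\b\setminus s_{j-1}'\overline\b$, applies Mergelyan's theorem, and only then invokes Lemma~\ref{lem:AL}; embeddedness is never recovered after the fact but is carried along at every stage. Relatedly, your ``direct limit'' complex structure is both unnecessary and delicate to justify: the paper fixes the complex structure once and for all at the outset, via the Alarc\'on--L\'opez theorem \cite{AlarconLopez2013JGA} that every topological type carries a structure admitting a proper holomorphic embedding into $\c^2$, and then only shrinks the domain to a subdomain $D$ homeomorphic to $M$; you would need an argument of this kind (or a proof that your step-by-step structures stabilize on compacta) to make the ``there exists a complex structure'' part of the statement rigorous.
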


It is perhaps worth mentioning that, choosing any closed discrete subset $\Lambda\subset\b$ such that $\overline\Lambda\setminus\Lambda=\sfb \b=\{\zeta\in\c^2\colon |\zeta|=1\}$, Theorem \ref{th:main-intro} trivially implies the following
\begin{corollary}\label{co:intro}
The unit ball $\b\subset\c^2$ contains complete properly embedded complex curves with any given topology and whose limit set equals $\sfb\b$.
\end{corollary}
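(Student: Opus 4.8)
The plan is to obtain the embedding as the limit of a sequence of holomorphic embeddings of an exhaustion of $M$ by compact bordered Riemann surfaces, interleaving three kinds of elementary step: attaching handles (to realize the topology of $M$ and, crucially, to control embeddedness while doing so), interpolation (to hit $\Lambda$), and pushing the boundary towards $\sfb\b$ while blowing up the intrinsic boundary distance (to get properness and completeness simultaneously). Concretely, first I would fix a point $x_0\in M$ and an exhaustion $M=\bigcup_{n\in\n}M_n$ by connected compact smoothly bounded domains with $x_0\in M_1$, $M_n$ contained in the relative interior of $M_{n+1}$, no connected component of $M\setminus M_n$ relatively compact, and such that $M_{n+1}$ is obtained from a deformation retract of $M_n$ by attaching at most one $1$-handle. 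Since we are free to choose the complex structure on $M$, we may endow each $M_n$ with a structure of compact bordered Riemann surface for which $M_n$ is Runge in $M_{n+1}$, and carry this structure along; and we enumerate $\Lambda=\{p_j\colon j\in\n\}$ (the case of $\Lambda$ finite or empty being only easier).

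Then I would construct inductively holomorphic embeddings $f_n\colon M_n\hra\b$, a decreasing sequence $\epsilon_n>0$, and finite sets $\{q_1,\dots,q_{k_n}\}\subset M_n$ with $f_n(q_j)=p_j$ and $k_n\ge n$, such that: (i) $\sup_{M_{n-1}}|f_n-f_{n-1}|<\epsilon_n$ and $f_n$ matches $f_{n-1}$ to first order at $q_1,\dots,q_{k_{n-1}}$; (ii) $f_n(M_n\setminus M_{n-1})\subset\{z\in\b\colon |z|>1-\tfrac1n\}$ while $f_n(M_{n-1})$ stays in a fixed compact $K_{n-1}\Subset\b$; and (iii) the $f_n$-induced distance in $M_n$ from $x_0$ to $bM_n$ exceeds $n$. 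Property (ii) is the backbone of the bookkeeping: it forces the images of different levels of the exhaustion into pairwise disjoint shells of $\b$, so that in the end self-intersections can occur only inside a single level, hence inside a piece of uniformly bounded topology.

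Each inductive step I would split into two stages. In the topological stage I attach the handle: by general position I choose a smooth embedded arc in $\b$ whose interior avoids $f_n(M_n)$, whose endpoints lie on $f_n(bM_n)$, and which realizes the $1$-handle of $M_{n+1}$; since arcs are one-real-dimensional and $f_n(M_n)$ is two-real-dimensional in the four-real-dimensional $\b$, a generic such arc is embedded and meets $f_n(M_n)$ only at its endpoints. I thicken it to a thin embedded band and, by a Mergelyan-type approximation theorem with jet interpolation on admissible sets in the spirit of \cite{AlarconForstneric2013MA,AlarconDrinovecForstnericLopez2015PLMS}, I approximate the resulting embedded admissible map uniformly by a holomorphic map $M_{n+1}\to\b$ which matches the prescribed $1$-jets at $q_1,\dots,q_{k_n}$, adjoins preimages of $p_{k_n+1},\dots$ as new interpolation nodes, and stays $\epsilon$-close to $f_n$ on $M_n$; a good enough such approximation is again an embedding, since embeddings form an open subset of $C^1(M_{n+1},\c^2)$ and we are approximating an embedded model. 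In the metric stage I apply the technique of \cite{Globevnik2015AM,Globevnik2016JMAA,AlarconGlobevnikLopez2016Crelle}: compositions of shear automorphisms of $\c^2$ (or, alternatively, Riemann--Hilbert modifications) that push the boundary of a slightly shrunk copy of the current domain to within $1/(n+1)$ of $\sfb\b$ and force the $x_0$-to-boundary distance past $n+1$, while moving $f_n(M_n)$ by less than $\epsilon_{n+1}$ and leaving the finitely many interpolation conditions untouched; being automorphisms of $\c^2$, they preserve embeddedness for free. Relabeling the resulting domain as $M_{n+1}$ finishes the step.

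Finally, with $\epsilon_n\to 0$ fast enough, $f:=\lim_n f_n$ is a well-defined holomorphic map $M\to\b$ for the complex structure carried along by the exhaustion; it is proper and complete by (ii) and (iii); its image contains $\Lambda$ by the interpolation in (i) and $k_n\to\infty$; and it is injective, because on each $M_n$ it is a uniform limit of embeddings eventually arbitrarily close to $f_n$, while (ii) rules out crossings between images of distinct levels in the limit. The main obstacle is the interplay, within one inductive step, between the three competing requirements — realizing the handle by an \emph{embedded} arc and band, performing a \emph{global} uniform approximation that could a priori create self-intersections, and pushing by automorphisms without undoing the interpolation or the shell separation of (ii) — and, above all, maintaining an embedding at every single stage: since self-intersections of complex curves in $\c^2$ are stable, injectivity can never be repaired by a final generic perturbation, and must instead be kept throughout by the combined use of one-dimensional general position (for the arcs), openness of embeddings among $C^1$ maps (for the approximation), and injectivity of automorphisms of $\c^2$ (for the push).
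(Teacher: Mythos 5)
Your proposal re-derives the entire main theorem of the paper from scratch, but the corollary as stated is meant to be an immediate consequence of Theorem~\ref{th:main-intro}: one simply fixes a closed discrete set $\Lambda\subset\b$ with $\overline\Lambda\setminus\Lambda=\sfb\b$ and applies the theorem; since $\Lambda\subset\wt\psi(D)\subset\b$ and $\wt\psi$ is proper into $\b$, the limit set of the image is trapped between $\overline\Lambda\setminus\Lambda$ and $\sfb\b$, hence equals $\sfb\b$. You never make this reduction, and more importantly you never close the loop on the statement actually being proved: your construction produces a proper embedding, which gives limit set \emph{contained in} $\sfb\b$, but nothing in your argument forces the limit set to be \emph{all} of $\sfb\b$. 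You do set up interpolation of an unspecified closed discrete set $\Lambda$, but you never say that $\Lambda$ must be chosen with $\overline\Lambda\setminus\Lambda=\sfb\b$, nor do you ever use the hitting condition to conclude anything about the limit set. As written, the target conclusion is simply not reached.

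Within the re-sketch of the main construction there are further soft spots. The ``metric stage'' is attributed to ``compositions of shear automorphisms'' pushing the boundary outward while ``forcing the $x_0$-to-boundary distance past $n+1$''; but automorphisms close to the identity on a large ball and pushing a thin outer annulus do not, by themselves, create intrinsic length. The mechanism in \cite{AlarconGlobevnikLopez2016Crelle} (and in this paper, via Lemmas~\ref{lem:building} and \ref{lem:avoiding}) is an obstacle argument: one first builds a tidy labyrinth of tangent balls in the shell $r_j'\b\setminus r_j\overline\b$, then uses an automorphism to make the curve avoid it, and only afterwards pushes the boundary past $r_j'$, so that the new annular piece is forced to wind around the labyrinth and acquire length. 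Without that obstacle, step (iii) of your induction is unsupported. Finally, your embeddedness-of-the-limit argument from condition (ii) is stated only for the single map $f_n$, not for the tail $\{f_m\}_{m\ge n}$; one must propagate a quantitative version of the shell condition through all later corrections (this is what conditions (4$_j$)--(5$_j$) with the nested radii $s_{j-1}'<r_j<r_j'<s_j<s_j'$ accomplish in the paper), otherwise uniform convergence alone does not rule out crossings between images of distinct annuli in the limit.
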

Although it is not explicitly stated there, Corollary \ref{co:intro} in the simply-connected case straightforwardly follows from the results by Globevnik in \cite{Globevnik2016JMAA}.

Our method of proof exploits some ideas from both \cite{AlarconGlobevnikLopez2016Crelle} and \cite{Globevnik2016JMAA} (in particular, our construction technique is based on the use of holomorphic automorphisms of $\c^2$), but also from \cite{AlarconLopez2013JGA} where Alarc\'on and L\'opez constructed properly embedded complex curves in $\c^2$ with arbitrary topology. The latter contributes to the so-called embedding problem for open Riemann surfaces in $\c^2$; a long-standing open question in Riemann Surface Theory asking whether every open Riemann surface properly embeds in $\c^2$ as a complex curve (cf.\ Bell and Narasimhan \cite[Conjecture 3.7, page 20]{BellNarasimhan1990book}; for recent advances and a history of this classical problem we refer to the works by Forstneri\v c and Wold \cite{ForstnericWold2009JMPA,ForstnericWold2013APDE} and the references therein). It is shown in \cite{ForstnericWold2009JMPA} that given a compact bordered Riemann surface $\overline M=M\cup \sfb M$ admitting a smooth embedding $f\colon\overline M\hra\c^2$ which is holomorphic in $M$, there is a proper holomorphic embedding $\wt f\colon M\hra\c^2$ which is as close as desired to $f$ uniformly on a given compact subset of $M$. (A {\em compact bordered Riemann surface} $\overline M$ is a compact Riemann surface with boundary $\emptyset\neq 	\sfb M\subset \overline M$ consisting of finitely many pairwise disjoint smooth Jordan curves; its interior $M=\overline M\setminus \sfb M$ is called a {\em bordered Riemann surface}.) This fact and the arguments in its proof were key in Alarc\'on and L\'opez's construction method in \cite{AlarconLopez2013JGA} which will be used in the proof of Theorem \ref{th:main-intro}.

We strongly expect that the new construction techniques developed in this paper may be adapted to prove the statement of Theorem \ref{th:main-intro} but replacing the ball $\b$ by any convex domain of $\c^2$. The following questions, concerning pseudoconvex domains, remain open and seem to be much more challenging.

\begin{problem}\label{pro:pseudoconvex}
Let $D\subset\c^2$ be a pseudoconvex Runge domain. Does there exist a complete proper holomorphic embedding $\d\hra D$? Given a closed discrete subset $\Lambda\subset D$, do there exist complete properly embedded complex curves in $D$ containing $\Lambda$? 
\end{problem}

As we have already mentioned, every bordered Riemann surface $M$ admits a complete proper holomorphic {\em immersion} $M\to\b$ (see \cite{AlarconForstneric2013MA}), and if in addition there is a smooth embedding $\overline M\to\c^2$, being holomorphic in $M$, then $M$ properly holomorphically embeds into $\c^2$ (see \cite{ForstnericWold2009JMPA}). It is however an open question, likely very difficult, whether every bordered Riemann surface admits a holomorphic embedding in $\c^2$ (even without asking to the embedding any global condition such as completeness or properness); see e.g. the introduction of \cite{ForstnericWold2009JMPA} or Section 8.9 in the monograph by Forstneri\v c \cite{Forstneric2011book} for more information. Thus, one is also led to ask:
\begin{problem}\label{pro:BRS}
Let $\overline M=M\cup \sfb M$ be a compact bordered Riemann surface and assume that there is a smooth embedding $\overline M\hra \c^2$ which is holomorphic in $M$. Does $M$ admit complete holomorphic embeddings $M\hra\c^2$ with bounded image? 
\end{problem}

We hope to return to these interesting questions in a future work.

%
%
\subsection*{Organization of the paper}

In Section \ref{sec:prelim} we set the notation that will be used throughout the paper and, with the aim of making it self-contained, state some already known results which will be used in the proof of Theorem \ref{th:main-intro}. In Section \ref{sec:MT} we prove an approximation result by properly embedded complex curves in the unit ball $\b\subset\c^2$ (see Theorem \ref{th:main}) from which Theorem \ref{th:main-intro} will be easily derived. 


\section{Preliminaries}\label{sec:prelim}

We denote by $|\cdot|$ and $\b=\{\zeta\in\c^2\colon |\zeta|<1\}$ the Euclidean norm and the unit Euclidean ball in $\c^2$. For a subset $C\subset\c^2$ we denote by $\overline C$, $\mathring C$, and $\sfb C=\overline C\setminus\mathring C$ the topological closure, interior, and frontier of $C$ in $\c^2$, respectively. Also, given a point $\xi\in\c^2$ and a number $r\in\r_+=[0,+\infty[$ we write $\xi+rC=\{\xi+r\zeta\colon \zeta\in C\}$.

Let $A$ be a smoothly bounded compact domain in an open Riemann surface and let $k\in\z_+=\{0,1,2,...\}$. We denote by $\Ascr^k(A)$ the space of functions $A\to\c$ of class $\Cscr^k(A)$ which are holomorphic on the interior $\mathring A=\overline A\setminus \sfb A$. If $N\in\n$ we will simply write $\Ascr^k(A)$ instead of $\Ascr^k(A)^N=\Ascr^k(A)\times \stackrel{\text{$N$ times}}{\cdots}\times \Ascr^k(A)$ when there is no place for ambiguity. Given an immersion $\psi\colon A\to\c^N$ $(N>1)$ of class $\Cscr^1(A)$, we denote by $\dist_\psi\colon A\times A\to\r_+$ the Riemannian distance in $A$ induced by the Euclidean metric of $\c^N$ via $\psi$; that is:
\[
	\dist_\psi(p,q):=\inf\{\length(\psi(\gamma))\colon \text{ $\gamma\subset A$ path connecting $p$ and $q$}\},\quad p,q\in A,
\]
where $\length(\cdot)$ denotes the Euclidean length in $\c^N$.

\subsection{Tangent balls}\label{ss:TB}

Given a point $\zeta\in\c^2\setminus\{0\}$ and a number $r>0$, we denote by  $\Tscr(\zeta,r)$ the closed ball with center $\zeta$ and radius $r$ in the real affine hyperplane $H_\zeta$ tangent to the sphere $\sfb(|\zeta|\b)$ at the point $\zeta$, that is:
\[
	\Tscr(\zeta,r):=\{\xi\in H_\zeta\colon |\xi-\zeta|\le r\}\subset\c^2.
\]
According to Alarc\'on, Globevnik, and L\'opez \cite[Definition 1.3]{AlarconGlobevnikLopez2016Crelle}, the set $\Tscr(\zeta,r)$ above is called {\em the tangent ball of center $\zeta$ and radius $r$}. A collection $\Fgot=\{\Tscr(\zeta_j,r_j)\}_{j\in J}$ of tangent balls in $\b\subset\c^2$ will be called {\em tidy} (cf.\ \cite[Definition 1.4]{AlarconGlobevnikLopez2016Crelle}) if it satisfies the following requirements:
\begin{enumerate}[A]
\item[$\bullet$] $\Tscr(\zeta_j,r_j)\subset\b$ for all $j\in J$ and the tangent balls in $\Fgot$ are pairwise disjoint.
\item[$\bullet$] $t\overline\b$ intersects finitely many tangent balls in the family $\Fgot$ for all $0<t<1$.
\item[$\bullet$] If $\Tscr(\zeta,r)$, $\Tscr(\zeta',r') \in\Fgot$ and $|\zeta|=|\zeta'|$, then $r=r'$.
\item[$\bullet$] If $\Tscr(\zeta,r)$, $\Tscr(\zeta',r') \in\Fgot$ and $|\zeta|<|\zeta'|$, then $\Tscr(\zeta,r)\subset|\zeta'|\b$.
\end{enumerate}
Notice that a tidy collection $\Fgot=\{\Tscr(\zeta_j,r_j)\}_{j\in J}$ of tangent balls in $\b$ consists of countably many elements; and so we may assume that $J\subset\n=\{1,2,3,...\}$. We denote by
\[
	|\Fgot|:=\bigcup_{j\in J} \Tscr(\zeta_j,r_j)
\]
the union of all the tangent balls in a tidy collection. Note that if $J$ is finite then $|\Fgot|$ is compact, whereas if $J$ is infinite then $|\Fgot|$ is a proper subset of $\b$.

Alarc\'on, Globevnik, and L\'opez proved in \cite{AlarconGlobevnikLopez2016Crelle} the following two results, involving tidy collections of tangent balls, which will be invoked in our argumentation.
%
%
\begin{lemma}[\text{\cite[Lemma 2.4]{AlarconGlobevnikLopez2016Crelle}}]\label{lem:building}
Given numbers $0<r<r'<1$ and $\ell>0$ there is a finite tidy collection $\Fgot$ of tangent balls in $\b$ such that $|\Fgot|\subset r'\b\setminus r\overline \b$ and the length of any path $\gamma\colon [0,1]\to r'\overline\b\setminus ((r\b)\cup |\Fgot|)$ with $|\gamma(0)|=r$ and $|\gamma(1)|=r'$ is at least $\ell$.
\end{lemma}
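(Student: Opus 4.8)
The plan is to build the tangent balls in finitely many concentric "annular shells" between the spheres $\sfb(r\b)$ and $\sfb(r'\b)$, arranging that every shell is so densely packed with (pairwise disjoint) tangent balls that any path crossing it must be long. Fix first a large integer $m$ and radii $r=t_0<t_1<\cdots<t_m=r'$. Within the $k$-th shell $t_k\overline\b\setminus t_{k-1}\b$ I would place tangent balls $\Tscr(\zeta,\rho_k)$ with all centres $\zeta$ on a single sphere $\sfb(s_k\b)$ for some $t_{k-1}<s_k<t_k$, all of the same radius $\rho_k$ (this takes care of the third tidiness condition automatically within a shell), choosing $\rho_k$ small enough that each such tangent ball lies inside the thin shell $t_k\b\setminus t_{k-1}\overline\b$ and that, moreover, it lies inside $t_k\b$ — which, since every tangent ball of a later shell will be centred further out, secures the fourth tidiness condition between shells. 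Taking a maximal $\rho_k$-separated net of centres on $\sfb(s_k\b)$ and shrinking $\rho_k$ slightly so that the tangent balls become genuinely pairwise disjoint gives the first bullet; since $J$ is finite the second bullet is automatic. This produces a finite tidy collection $\Fgot$ with $|\Fgot|\subset r'\b\setminus r\overline\b$.

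Next I would estimate the length of an admissible path $\gamma\colon[0,1]\to r'\overline\b\setminus((r\b)\cup|\Fgot|)$ with $|\gamma(0)|=r$, $|\gamma(1)|=r'$. By continuity $\gamma$ must, for each $k$, contain a subarc $\gamma_k$ running from $\sfb(t_{k-1}\b)$ to $\sfb(t_k\b)$ inside the shell $S_k:=t_k\overline\b\setminus t_{k-1}\b$ and avoiding the tangent balls placed in that shell. The key geometric point is that near the sphere $\sfb(s_k\b)$, where the tangent balls sit, a tangent ball $\Tscr(\zeta,\rho_k)$ is tangent to the sphere, so to pass "between" two adjacent tangent balls the path must either detour sideways by an amount comparable to the gap between neighbouring centres, or go radially inward/outward past the flat disk of diameter $2\rho_k$; in either case, if the net of centres is fine (mesh $\ll\rho_k$) and the shell is thin (width $\ll\rho_k$), the path is forced to travel a definite distance $\delta_k>0$ to clear the obstacle, and one gets $\length(\gamma_k)\ge\delta_k$. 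Summing, $\length(\gamma)\ge\sum_{k=1}^m\delta_k$. Finally I would choose the parameters — first $\rho_k$, then the mesh of the nets, then the shell widths $t_k-t_{k-1}$, and the number $m$ — so that $\sum_{k=1}^m\delta_k\ge\ell$; for instance one can make each $\delta_k$ bounded below by a fixed positive constant and then take $m$ large.

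The main obstacle is the lower bound $\length(\gamma_k)\ge\delta_k$: one has to rule out "cheap" crossings that sneak between tangent balls with only a short detour. The honest way to do this is to cover a neighbourhood of $\sfb(s_k\b)$ inside $S_k$ by the tangent balls up to a thin residual set whose "crossing width" in the radial direction is small; more precisely, one shows that the complement of $|\Fgot|$ inside $S_k$, when intersected with a slab $\{s_k-\epsilon\le|\zeta|\le s_k+\epsilon\}$, has the property that any path joining its two boundary spheres must have length at least some $\delta_k$ depending only on $\rho_k$ and the packing. A clean way to organise this is to compare with the simpler one real variable picture: project to the real $2$-plane $H_\zeta$ for a centre $\zeta$ the path comes near, use that each tangent ball is a genuine round disk of radius $\rho_k$ there, and invoke a packing-and-covering argument for disks in the plane (a maximal $\rho_k$-net of points on a circle, with disks of radius $\rho_k/2$ say, leaves gaps of controlled size). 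I would first do the case $m=1$ carefully to isolate this estimate, and then iterate; the rest of the argument — verifying the four tidiness conditions and the inclusion $|\Fgot|\subset r'\b\setminus r\overline\b$ — is bookkeeping with the parameters.
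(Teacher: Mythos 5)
This lemma is not proved in the present paper; it is quoted verbatim from \cite{AlarconGlobevnikLopez2016Crelle}, so there is no internal proof to compare against. Evaluating your sketch on its own terms, the crucial step fails: a single sphere's worth of pairwise disjoint tangent balls inside a shell cannot force a definite crossing length. Two tangent balls $\Tscr(\zeta_1,\rho)$, $\Tscr(\zeta_2,\rho)$ with $|\zeta_1|=|\zeta_2|=s$ and angular separation $\alpha$ are disjoint precisely when $s\tan(\alpha/2)\ge\rho$, and this is exactly the condition under which their radial shadows on the sphere $\sfb(\sqrt{s^2+\rho^2}\,\b)$ have disjoint interiors. Hence, after you enforce pairwise disjointness, the shadows never cover the sphere, and there are gap directions $\omega$ such that the straight radial segment $\{t\omega:\ t_{k-1}\le t\le t_k\}$ meets no tangent ball in the $k$-th shell (if $\omega$ lies outside every shadow then the radial ray meets each hyperplane $H_\zeta$ only at modulus $>\sqrt{s_k^2+\rho_k^2}$, i.e.\ outside $\Tscr(\zeta,\rho_k)$). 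So a path can cross your $k$-th shell with length exactly $t_k-t_{k-1}$, and summing over shells gives at most $r'-r<1$. No choice of $m$, $\rho_k$, or mesh can push this above $r'-r$, so your construction cannot produce the required bound for any $\ell\ge 1$. In addition, the parameter regime you describe, with centres forming a $\rho_k$-separated net, tangent balls of radius comparable to $\rho_k$, and ``mesh $\ll\rho_k$,'' is internally inconsistent with disjointness: once you shrink the radius so the balls become disjoint, the mesh is necessarily on the order of \emph{twice} the radius, not much smaller than it.

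The missing idea is that one layer of tangent balls is never a barrier; the proof must interleave several nearby spheres of tangent balls so that, taken together, they trap the path, and then convert this trapping into a length estimate. The mechanism that makes tangent balls (rather than round balls) useful here, and which your sketch never invokes, is the identity $\Tscr(\zeta,\rho)=H_\zeta\cap\sqrt{|\zeta|^2+\rho^2}\,\overline\b$: a path that stays inside $\sqrt{|\zeta|^2+\rho^2}\,\overline\b$ and avoids $\Tscr(\zeta,\rho)$ must remain on one side of the affine hyperplane $H_\zeta$. Without exploiting this, the ``detour sideways'' heuristic does not rule out cheap radial crossings through gaps, and the argument as written does not prove the lemma.
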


The next result is not explicitly stated in \cite{AlarconGlobevnikLopez2016Crelle} but straightforwardly follows from an inspection of the proof of \cite[Theorem 1.6]{AlarconGlobevnikLopez2016Crelle} as a standard recursive application of \cite[Lemma 3.1]{AlarconGlobevnikLopez2016Crelle}.
%
%
\begin{lemma}\label{lem:avoiding} Let $0<r<r'<1$ be numbers and let $\Fgot$ be a finite tidy collection of tangent balls in $\b$ with $|\Fgot|\subset r'\b\setminus r\overline \b$. Then, given a properly embedded complex curve $Z\subset\c^2$ and a number $\epsilon>0$, there is a holomorphic automorphism $\Phi\colon\c^2\to\c^2$ satisfying the following properties:
\begin{enumerate}[\rm (i)]
\item $\Phi(Z)\cap |\Fgot|=\emptyset$.
\item $|\Phi(\zeta)-\zeta|<\epsilon$ for all $\zeta\in r\overline\b$.
\end{enumerate}
\end{lemma}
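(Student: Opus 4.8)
The plan is to build $\Phi$ as a finite composition $\Phi=\Phi_k\circ\cdots\circ\Phi_1$ of holomorphic automorphisms of $\c^2$, one for each tangent ball $\Tscr(\zeta_j,r_j)$ appearing in the finite tidy collection $\Fgot=\{\Tscr(\zeta_j,r_j)\}_{j=1}^k$. Since $\Fgot$ is finite this is a genuinely finite recursion. The tool that clears one tangent ball at a time is \cite[Lemma 3.1]{AlarconGlobevnikLopez2016Crelle}: it provides, for a given properly embedded complex curve $Y\subset\c^2$ missing a compact set $K$ and for a single tangent ball $\Tscr$, a holomorphic automorphism that pushes $Y$ off $\Tscr$ while moving a prescribed compact set (here $K$ together with all the previously cleared tangent balls) by less than a prescribed amount. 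So at step $j$ I would apply that single-ball lemma with the curve $Z_{j-1}:=\Phi_{j-1}\circ\cdots\circ\Phi_1(Z)$, with the tangent ball $\Tscr(\zeta_j,r_j)$, and with the error budget $\epsilon/2^{j}$ on the compact set $r\overline\b\cup\bigcup_{i<j}\Tscr(\zeta_i,r_i)$, obtaining $\Phi_j$ with $\Phi_j(Z_{j-1})\cap\Tscr(\zeta_j,r_j)=\emptyset$ and $|\Phi_j(\zeta)-\zeta|<\epsilon/2^j$ on that compact set.

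The main point to check is that clearing $\Tscr(\zeta_j,r_j)$ does not re-contaminate the balls $\Tscr(\zeta_i,r_i)$, $i<j$, that were already cleared; this is where the tidiness hypothesis is used. Because $\Fgot$ is tidy and $|\Fgot|\subset r'\b\setminus r\overline\b$, one can order the balls by increasing distance to the origin (equivalently, by the hyperplane levels $|\zeta_j|$), and the nesting axiom (if $|\zeta_i|<|\zeta_j|$ then $\Tscr(\zeta_i,r_i)\subset|\zeta_j|\b$) together with pairwise disjointness lets us treat the already-cleared balls as part of the compact set that $\Phi_j$ is required to move only slightly. Choosing the perturbation $\Phi_j$ small enough — smaller than the (positive) distance from $\Tscr(\zeta_i,r_i)$ to $\partial\Tscr(\zeta_i,r_i)$ is not quite the right phrasing, but smaller than the positive distance from $Z_{j-1}$, which already misses $\bigcup_{i<j}\Tscr(\zeta_i,r_i)$, to that union — guarantees $\Phi_j(Z_{j-1})$ still misses $\bigcup_{i\le j}\Tscr(\zeta_i,r_i)$. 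Since there are only finitely many balls, finitely many such constraints are imposed and all can be met simultaneously. In the end $\Phi(Z)=Z_k$ misses every $\Tscr(\zeta_j,r_j)$, hence misses $|\Fgot|$, giving (i), while $|\Phi(\zeta)-\zeta|\le\sum_{j=1}^k|\Phi_j\circ\cdots\circ\Phi_1(\zeta)-\Phi_{j-1}\circ\cdots\circ\Phi_1(\zeta)|<\sum_{j\ge1}\epsilon/2^j=\epsilon$ on $r\overline\b$ by a standard telescoping estimate (adjusting the $2^{-j}$ budget by a uniform modulus-of-continuity factor if one wants to be scrupulous), giving (ii).

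The step I expect to be the real obstacle — or at least the one requiring care — is the bookkeeping that ensures the recursion closes: one must verify that \cite[Lemma 3.1]{AlarconGlobevnikLopez2016Crelle} genuinely applies at each stage (its hypotheses are about a properly embedded curve, a compact set disjoint from it, and a tangent ball positioned compatibly), and that the ``do not disturb the earlier balls'' condition can be encoded as a smallness requirement on each $\Phi_j$ rather than as an extra structural constraint. The excerpt itself flags this by saying the lemma ``straightforwardly follows from an inspection of the proof of \cite[Theorem 1.6]{AlarconGlobevnikLopez2016Crelle} as a standard recursive application of \cite[Lemma 3.1]{AlarconGlobevnikLopez2016Crelle}'', so I would present the argument at exactly this level of detail: set up the recursion, invoke the single-ball lemma with shrinking error budgets on the growing compact sets, and collect the telescoped estimate.
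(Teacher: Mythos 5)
Your proposal is correct and is precisely the argument the paper has in mind: the paper gives no proof of this lemma but defers to a ``standard recursive application of \cite[Lemma 3.1]{AlarconGlobevnikLopez2016Crelle}'', and your finite composition of single-ball automorphisms, ordered by increasing $|\zeta_j|$ so that tidiness keeps the already-cleared balls inside the region where each new automorphism is near the identity, with shrinking error budgets and the positivity of $\dist(Z_{j-1},\bigcup_{i<j}\Tscr(\zeta_i,r_i))$ guaranteeing no re-contamination, is exactly that recursion. The small points you flag (controlling each $\Phi_j$ on a slightly enlarged compact set so that the telescoping estimate and the preimage argument close up) are genuine but routine, and handling them as smallness requirements is the intended level of rigor.
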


\subsection{Hitting and approximation lemmas}\label{ss:hitting}


In this subsection we state two results which will also be used in our construction.
The first one, due to Globevnik, will be key in order to achieve the hitting condition in the statement of Theorem \ref{th:main-intro}.
%
%
\begin{lemma}[\text{\cite[Lemma 7.2]{Globevnik2016JMAA}}]\label{lem:hiting}
Given a finite subset $\Lambda\subset\b$ there exist numbers $\eta>0$ and $\mu>0$ such that the following holds. Given a number $0<\delta<\eta$ and a 
%
%
map $\varphi\colon\Lambda\to\c^2$ such that
\[
	|\varphi(\zeta)-\zeta|<\delta\quad \text{for all $\zeta\in\Lambda$},
\]
there exists a holomorphic automorphism $\Psi\colon\c^2\to\c^2$ satisfying the following conditions:
\begin{enumerate}[\rm (i)]
\item $\Psi(\varphi(\zeta))=\zeta$ for all $\zeta\in\Lambda$.
\item $|\Psi(\zeta)-\zeta|<\mu\delta$ for all $\zeta\in\overline\b$.
\end{enumerate}
\end{lemma}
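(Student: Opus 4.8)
The plan is to build $\Psi$ as a composition of two polynomial \emph{shears} of $\c^2$. Write $\Lambda=\{p_1,\dots,p_m\}$ with $p_j=(\alpha_j,\beta_j)$. It suffices to prove, for every fixed $R\ge 1$, that there exist $\eta,\mu>0$ (depending only on $\Lambda$ and $R$) such that the conclusion holds with (ii) strengthened to $|\Psi(\zeta)-\zeta|<\mu\delta$ for all $\zeta\in R\,\overline\b$; the lemma as stated is the case $R=1$. Moreover it suffices to treat configurations in which the $\alpha_j$ are pairwise distinct and the $\beta_j$ are pairwise distinct: the general case follows by conjugating everything with a fixed invertible $\c$-linear map $L\colon\c^2\to\c^2$ putting $\Lambda$ in this position — if $\Psi_0$ does the job for $L(\Lambda)$ with estimate (ii) on $\|L\|\,\overline\b$, then $\Psi:=L^{-1}\circ\Psi_0\circ L$ does the job for $\Lambda$, with $\eta,\mu$ changed only by the fixed factors $\|L\|^{\pm 1}$. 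So assume the $\alpha_j$'s and the $\beta_j$'s are separated, and fix $\eta>0$ smaller than $\tfrac14\min_{j\neq k}\min\{|\alpha_j-\alpha_k|,|\beta_j-\beta_k|\}$ and small enough that $|p_j|+\eta<1$ for all $j$. For any $0<\delta<\eta$ and any $\varphi$ as in the statement, the points $q_j:=\varphi(p_j)=(a_j,b_j)$ then lie in $\b$, are pairwise distinct, with $|a_j-\alpha_j|,|b_j-\beta_j|<\delta$; moreover the $b_j$ remain in a fixed compact subset of $\c$ and pairwise apart by at least $\tfrac12\min_{j\neq k}|\beta_j-\beta_k|$.

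Next I correct the two coordinates in turn. Let $h$ be the Lagrange interpolation polynomial of degree $<m$ with $h(b_j)=\alpha_j-a_j$ (well defined since the $b_j$ are distinct) and set $S_1(z,w):=(z+h(w),w)$, a holomorphic automorphism of $\c^2$ with $S_1(q_j)=(\alpha_j,b_j)$. Let $g$ be the Lagrange polynomial of degree $<m$ with $g(\alpha_j)=\beta_j-b_j$ (the nodes $\alpha_j$ are fixed and distinct) and set $S_2(z,w):=(z,w+g(z))$, so that $S_2(\alpha_j,b_j)=(\alpha_j,\beta_j)=p_j$. Then $\Psi:=S_2\circ S_1$ satisfies (i) by construction. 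For (ii) I bound the two polynomials: writing $h=\sum_j(\alpha_j-a_j)L_j$ with $L_j(w)=\prod_{k\neq j}\frac{w-b_k}{b_j-b_k}$, the uniform separation and boundedness of the nodes $b_j$ yield a bound $\max_{|w|\le\rho}|L_j(w)|\le C(\rho)$ valid for every admissible configuration, whence $|h(w)|\le m\,C(\rho)\,\delta$ on $\rho\,\overline\b$; likewise $|g(z)|\le m\,C'(\rho)\,\delta$ on $\rho\,\overline\b$ with $C'(\rho)$ depending only on $\Lambda$ and $\rho$. Shrinking $\eta$ so that $S_1(R\,\overline\b)\subset 2R\,\overline\b$, we obtain, for $\zeta\in R\,\overline\b$,
\[
|\Psi(\zeta)-\zeta|\le |S_2(S_1(\zeta))-S_1(\zeta)|+|S_1(\zeta)-\zeta|\le \big(m\,C'(2R)+m\,C(R)\big)\,\delta=:\mu\,\delta,
\]
which is the desired estimate on $R\,\overline\b$. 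Undoing the linear normalization then gives the lemma.

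The only point needing genuine care — and it is not deep — is the \emph{uniformity} over $\delta<\eta$ of the bound $\max_{|w|\le\rho}|L_j(w)|\le C(\rho)$: as the nodes $b_j$ range over the compact set permitted by $\delta<\eta$, they stay pairwise apart by at least $\tfrac12\min_{j\neq k}|\beta_j-\beta_k|>0$, so the denominators in $L_j$ are bounded below and the numerators bounded above on $\rho\,\overline\b$, giving a configuration-independent bound. I expect this bookkeeping, together with the tracking of the linear conjugation on the sup-norm estimate, to be the only slightly delicate part; everything else is the standard two-shear interpolation for automorphisms of $\c^2$.
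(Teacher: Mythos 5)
This lemma is quoted in the paper from \cite[Lemma~7.2]{Globevnik2016JMAA} without proof, so there is no internal argument to compare against; judged on its own, your two-shear construction is correct. The reduction to separated coordinates via a fixed linear conjugation, the Lagrange interpolation with nodes bounded below in separation uniformly in $\delta<\eta$, and the resulting linear-in-$\delta$ bound on $R\,\overline\b$ all hold up, and this is essentially the standard shear-interpolation argument underlying the cited result.
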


%
%
The second one, which will help us to increase the topology in the recursive construction, is a particular case of a result by Alarc\'on and L\'opez \cite[Theorem 4.5]{AlarconLopez2013JGA}; it also easily follows from the results by Forstneri\v c and Wold in \cite{ForstnericWold2009JMPA}.
\begin{lemma}\label{lem:AL}
Let $\overline M=M\cup \sfb M$ be a compact bordered Riemann surface and let $K\subset M$ be a connected, smoothly bounded, Runge compact domain which is a strong deformation retract of $\overline M$. Let $\phi\colon K\hra\c^2$ be an embedding of class $\Ascr^1(K)$ and assume that there exists a number $s>0$ such that
\[
	\phi(bK)\cap s\overline\b=\emptyset.
\]
Then, given $\epsilon>0$, there are an open domain $\Omega\subset M$ and a proper holomorphic embedding $\wt\phi\colon \Omega\hra\c^2$ such that:
\begin{enumerate}[\rm (i)]
\item $K\subset \Omega$ and $\Omega$ is a deformation retract of $M$ and homeomorphic to $M$.
\item $|\wt\phi(p)-\phi(p)|<\epsilon$ for all $p\in K$.
\item $\wt\phi(\Omega\setminus\mathring K)\cap s\overline\b=\emptyset$.
\end{enumerate}
\end{lemma}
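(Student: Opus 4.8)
The plan is to derive the lemma from the Forstneri\v c--Wold proper embedding theorem quoted in the Introduction --- or rather from its main ingredient, a ``pushing-to-infinity'' lemma (see \cite{ForstnericWold2009JMPA}); equivalently, the statement is a particular case of \cite[Theorem 4.5]{AlarconLopez2013JGA}. Since $K$ is a smoothly bounded Runge compact domain in $M$ which is a strong deformation retract of $\overline M$, Mergelyan's theorem and the Runge property let us, after an arbitrarily small $\Cscr^1(K)$-perturbation, replace $\phi$ by a map --- still called $\phi$ --- which is an embedding of $K$, lies within $\epsilon/2$ of the original on $K$, and extends holomorphically to a neighbourhood of a slightly larger connected, smoothly bounded, Runge compact domain $K'$ with $K\subset\mathring{K'}$; taking $K'$ close enough to $K$ we may also assume that $K'$ is again a strong deformation retract of $\overline M$, that $\phi|_{K'}$ is an embedding, and that $\phi(K'\setminus\mathring K)\cap s\overline\b=\emptyset$ (in particular $\phi(bK')\cap s\overline\b=\emptyset$). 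Put $\rho:=\tfrac12\big(\min_{K'\setminus\mathring K}|\phi|-s\big)>0$ and $\Omega:=\mathring{K'}$. Then $\overline{K'}$ is a compact bordered Riemann surface carrying the holomorphic embedding $\phi$, $K\subset\Omega$, and collapsing the collar $\overline M\setminus\mathring{K'}$ exhibits $\Omega$ as homeomorphic to $M$ and a deformation retract of $M$; thus (i) holds.

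Next I would run the Forstneri\v c--Wold construction inside $\overline{K'}$. Exhaust $\Omega$ by smoothly bounded, Runge compact domains $K=L_0\Subset L_1\Subset L_2\Subset\cdots$ with $\bigcup_nL_n=\Omega$, fix numbers $\epsilon_n>0$ with $\sum_n\epsilon_n<\min\{\epsilon/2,\rho/4\}$, set $\phi_0:=\phi$, and build holomorphic embeddings $\phi_n$ defined on a neighbourhood of $\overline{L_n}$ $(n\ge1)$ with
\[
|\phi_n-\phi_{n-1}|<\epsilon_n \ \text{ on } L_{n-1},\qquad\qquad \phi_n\big(\overline{L_n}\setminus\mathring{L_{n-1}}\big)\subset\{|\zeta|>s+\tfrac{\rho}{2}+(n-1)\}.
\]
For $n=1$ simply take $\phi_1:=\phi|_{\overline{L_1}}$: the first condition is trivial and the second holds because $\overline{L_1}\setminus\mathring K\subset K'\setminus\mathring K$, so that \emph{no} pushing is needed at this first step --- this is where the hypothesis $\phi(bK)\cap s\overline\b=\emptyset$ enters. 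For the inductive step $n\to n+1$ one first uses a Runge approximation to make $\phi_n$ holomorphic near $\overline{L_{n+1}}$ while moving it by less than $\epsilon_{n+1}/2$ on $L_n$, and then applies the pushing-to-infinity lemma to produce $\phi_{n+1}$: holomorphic near $\overline{L_{n+1}}$, an embedding, $\epsilon_{n+1}/2$-close to $\phi_n$ on $L_n$, and with $\phi_{n+1}\big(\overline{L_{n+1}}\setminus\mathring{L_n}\big)\subset\{|\zeta|>s+\tfrac{\rho}{2}+n\}$.

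The limit $\wt\phi:=\lim_n\phi_n$ exists and is holomorphic on $\Omega$; with the usual recursive choice of the $\epsilon_n$ it is injective --- on each $L_n$ because $\phi_n$ is an embedding there and the remaining perturbation is negligible, and otherwise because the pushing out eventually separates the images of the successive shells $\overline{L_n}\setminus\mathring{L_{n-1}}$ --- hence a holomorphic embedding. From the two displayed conditions and $\sum_n\epsilon_n<\rho/4$ one gets, for every $n\ge1$,
\[
\wt\phi\big(\overline{L_n}\setminus\mathring{L_{n-1}}\big)\subset\{|\zeta|>s+\tfrac{\rho}{4}+(n-1)\}\subset\{|\zeta|>s\}.
\]
The radii $s+\tfrac{\rho}{4}+(n-1)$ tend to $+\infty$, so $\wt\phi$ is proper; and since $\Omega\setminus\mathring K=\bigcup_{n\ge1}\big(\overline{L_n}\setminus\mathring{L_{n-1}}\big)$, the displayed inclusion gives $\wt\phi(\Omega\setminus\mathring K)\cap s\overline\b=\emptyset$, which is (iii). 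Finally $|\wt\phi-\phi|<\rho/4<\epsilon/2$ on $K$, which together with the initial perturbation yields (ii).

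The main obstacle is the pushing-to-infinity step itself, specifically keeping $\phi_{n+1}$ an \emph{embedding} and not merely an immersion: as stressed in the Introduction, self-intersections of complex curves in $\c^2$ are stable under small deformations, so embeddedness cannot be recovered by a generic perturbation and must be engineered into the construction of the automorphisms used to push the boundary out. This is exactly the technical heart of \cite{ForstnericWold2009JMPA} and of \cite[Theorem 4.5]{AlarconLopez2013JGA}, which I would invoke rather than reprove; everything else above --- Mergelyan and Runge approximation, the collar and exhaustion bookkeeping, and the elementary norm estimates --- is routine.
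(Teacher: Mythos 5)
Your proposal is correct and matches the paper's own treatment: the paper offers no proof of this lemma, stating only that it is a particular case of \cite[Theorem 4.5]{AlarconLopez2013JGA} and follows easily from \cite{ForstnericWold2009JMPA}, which are exactly the sources you defer to for the technical heart (pushing the boundary to infinity while preserving embeddedness). The surrounding reductions you spell out --- the Mergelyan/Runge perturbation to a slightly larger domain $K'$, the exhaustion and telescoping estimates giving (ii) and (iii), and the properness argument --- are the standard routine and are carried out correctly.
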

Recall that a compact subset $K$ of an open Riemann surface $M$ is said to be {\em Runge} or {\em holomorphically convex} if every continuous function $K\to\c$, holomorphic in $\mathring K$, may be approximated uniformly on $K$ by holomorphic functions $M\to\c$. The classical Mergelyan theorem ensures that $K\subset M$ is Runge if and only if $M\setminus K$ has no relatively compact connected components in $M$ (see Bishop \cite{Bishop1958PJM} and also Runge \cite{Runge1885AM} and Mergelyan \cite{Mergelyan1951DAN}).


\section{Proof of the main theorem}\label{sec:MT}

In this section we prove the following more precise version of Theorem \ref{th:main-intro}.

\begin{theorem}\label{th:main}
Let $M$ be an open connected Riemann surface, let $K\subset M$ be a connected, smoothly bounded, Runge compact domain, let $0<s<r<1$ be numbers, and assume that there is an embedding $\psi\colon K\to\c^2$ of class $\Ascr^1(K)$ such that 
\begin{equation}\label{eq:th-rs}
	\psi(\sfb K)\subset r\b\setminus s\overline\b.
\end{equation}
Let $\Lambda\subset \b $ be a closed discrete subset such that 
\begin{equation}\label{eq:th-Lambda}
	\Lambda\cap r\overline\b \subset\psi(\mathring K)\cap s\b.
\end{equation}
Then, given $\epsilon>0$, there are a domain $D\subset M$ and a complete proper holomorphic embedding $\wt\psi\colon D\hra\b $ satisfying the following properties:
\begin{enumerate}[\rm (i)]
\item $K\subset D$ and $D$ is a deformation retract of $M$ and homeomorphic to $M$.
\item $|\wt \psi(\zeta)-\psi(\zeta)|<\epsilon$ for all $\zeta\in K$.
\item $\Lambda\subset \wt \psi(D)$.
\item $\wt\psi(D\setminus K)\cap s\overline\b=\emptyset$.
\end{enumerate}
\end{theorem}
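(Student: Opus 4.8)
The plan is to build $\widetilde\psi$ by a recursive construction, alternating between two kinds of steps: \emph{label-and-topology steps}, which enlarge the domain to expose more of the topology of $M$ and simultaneously hit the next points of $\Lambda$, and \emph{completeness steps}, which insert tidy collections of tangent balls to force divergent paths to have infinite length. Concretely, I would exhaust $M$ by an increasing sequence $K=K_0\subset K_1\subset K_2\subset\cdots$ of connected, smoothly bounded, Runge compact domains with $\bigcup_n K_n=M$, chosen so that each inclusion $K_n\hookrightarrow K_{n+1}$ either is a strong deformation retract (no change of topology) or adds exactly one handle/end, in the standard way; and I would exhaust $\b$ by balls $s\overline\b\subset r\overline\b=\overline{r_0\b}\subset \overline{r_1\b}\subset\cdots$ with $r_n\nearrow 1$, arranging that $\Lambda\cap\overline{r_n\b}$ is a finite set for each $n$ (possible since $\Lambda$ is closed and discrete in $\b$). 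Along the recursion I would produce embeddings $\psi_n\in\Ascr^1(K_n)$, domains $\Omega_n$ with $K_n\subset\Omega_n\subset M$, numbers $r_{n-1}<\rho_n<r_n$, and automorphisms of $\c^2$, so that $\psi_n(\sfb K_n)$ stays outside $\overline{\rho_n\b}$, so that $\Lambda\cap\overline{\rho_n\b}\subset\psi_n(\mathring K_n)$ pointwise, so that any path in $\overline{r_n\b}\setminus r_{n-1}\b$ joining the two boundary spheres and avoiding the image has length $\ge 1$, and so that $\psi_n$ is $\epsilon/2^{n+1}$-close to $\psi_{n-1}$ on $K_{n-1}$.

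The core of each step uses the three lemmas in tandem. Starting from $\psi_{n-1}$ on $K_{n-1}$ with $\psi_{n-1}(\sfb K_{n-1})\subset r_{n-1}\b\setminus\rho_{n-1}\b$, I would first apply Lemma \ref{lem:AL} to the pair $(K_{n-1},K_n)$ (after shrinking $K_n$ slightly to a domain of the same topology on which $\psi_{n-1}$ extends as an embedding of class $\Ascr^1$, using that the general position of complex curves in $\c^2$ meets the diagonal in real codimension $0$ so that a generic $\Ascr^1$ extension can be made an embedding for the added handle — this is exactly where the arguments of \cite{AlarconLopez2013JGA,ForstnericWold2009JMPA} are invoked) to obtain a proper holomorphic embedding of an $M$-homeomorphic domain whose image stays outside $r_{n-1}\overline\b$ off a compact core. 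Truncating this at a suitable radius gives a new embedding $\phi$ of a domain $K_n'$ homeomorphic to $M$, with $\phi(\sfb K_n')$ far outside $\overline{r_{n-1}\b}$. Next, to arrange the hitting condition: the points of $\Lambda$ in the new shell $\overline{r_n\b}\setminus r_{n-1}\overline\b$ are finitely many, and since $\phi$ is a proper embedding I can pick for each such point $\lambda$ a preimage-free point of $\c^2$ close to $\lambda$ lying on $\phi(K_n')$; applying Lemma \ref{lem:hiting} with $\Lambda$ replaced by this finite set and $\varphi$ the near-identity map matching those points yields an automorphism $\Psi$ pushing the curve through the prescribed points of $\Lambda$ while moving $\overline\b$ by at most $\mu\delta$, hence controlling the distortion on the previously constructed part. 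Finally, to gain completeness, I use Lemma \ref{lem:building} to produce a finite tidy collection $\Fgot$ of tangent balls in the shell $r_n\b\setminus r_{n-1}\overline\b$ forcing length $\ge 1$ on crossing paths, and then Lemma \ref{lem:avoiding} to find an automorphism $\Phi$ that moves $\Psi(\phi(K_n'))$ off $|\Fgot|$ while changing it by less than a preassigned amount on $\overline{r_{n-1}\b}$. Composing and renaming gives $\psi_n$, $K_n$ (or its homeomorphic replacement), $\rho_n$, $r_n$.

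The convergence and verification are then routine. Choosing the successive $\epsilon$-thresholds to decay geometrically, the maps $\psi_n$ converge uniformly on compacts of $\bigcup_n\mathring K_n$ to a holomorphic map $\widetilde\psi$ on a domain $D:=\bigcup_n(\text{homeomorphic cores})$, which is a deformation retract of $M$ and homeomorphic to $M$ by the way the exhaustion was built; the $\epsilon$-closeness on $K=K_0$ gives (ii); properness into $\b$ follows because $\psi_n(\sfb K_n)\subset r_n\b\setminus\rho_n\overline\b$ with $r_n\nearrow1$ and $\rho_n\nearrow1$ forces the image of any divergent sequence in $D$ to leave every $\overline{r_m\b}$; condition (iii) is preserved under the small later perturbations since once $\lambda\in\psi_n(\mathring K_n)$ and subsequent maps move $\overline{r_n\b}$ by an amount summing to less than the distance from $\lambda$ to $\psi_n(\sfb(\text{the relevant core}))$, an elementary argument (Rouch\'e-type / degree argument on the curve) keeps $\lambda$ in the image; (iv) holds by construction; and completeness follows because each shell $\overline{r_n\b}\setminus r_{n-1}\overline\b$ now forces any path in $D$ crossing it to have $\widetilde\psi$-length at least $1-\sum_{k>n}\epsilon_k\ge 1/2$, so a divergent path has infinite length. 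I expect the main obstacle to be the bookkeeping of the mutually competing perturbation estimates: the automorphisms from Lemmas \ref{lem:hiting} and \ref{lem:avoiding} must simultaneously (a) be small enough on $\overline{r_{n-1}\b}$ to preserve both the previously achieved hitting of $\Lambda$ and the embeddedness and position of the earlier part of the curve, and (b) be compatible with the fixed tangent-ball obstacle $|\Fgot|$ and with pushing the curve exactly through the new points of $\Lambda$ — so the order ``topology, then hitting, then avoiding'' and the precise choice of radii $\rho_n<r_n$ and error budget at each stage is the delicate part, exactly as in \cite{AlarconGlobevnikLopez2016Crelle,Globevnik2016JMAA}.
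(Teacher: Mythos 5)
Your overall architecture (recursive exhaustion of $M$ and of $\b$ by shells, with Lemma \ref{lem:AL} for topology and properness, Lemma \ref{lem:hiting} for hitting, and Lemmas \ref{lem:building}--\ref{lem:avoiding} for completeness) is the right one, but there are two concrete gaps in how the lemmas are combined within a single step. First, your hitting step does not work as stated: you claim that ``since $\phi$ is a proper embedding I can pick for each such point $\lambda$ a \ldots point close to $\lambda$ lying on $\phi(K_n')$.'' Properness gives no such thing --- the curve crosses the shell $\overline{r_n\b}\setminus r_{n-1}\overline\b$ but need not come within the threshold $\delta<\eta$ of Lemma \ref{lem:hiting} of any prescribed point of $\Lambda$ in that shell (it could, for instance, stay in a thin neighbourhood of an affine line). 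The missing idea, which is the crux of the paper's construction, is to first \emph{steer} the curve near the new points: one attaches to the truncated domain an embedded arc whose image is a smooth arc in the outer part of the shell passing through all of $\Lambda_j$, extends the embedding smoothly over that arc, and applies Mergelyan's theorem to make the extension holomorphic; only then is the curve $\delta$-close to each point of $\Lambda_j$ and Lemma \ref{lem:hiting} applicable.

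Second, your order ``topology, then hitting, then avoiding'' fails. The avoiding automorphism $\Phi$ of Lemma \ref{lem:avoiding} is controlled only on $r_{n-1}\overline\b$, i.e.\ \emph{inside} the shell containing the tangent balls, while the points of $\Lambda$ you have just placed on the curve lie \emph{in} that shell (or outside it); hence $\Phi$ will in general move those points off the curve and destroy the hitting you achieved. The paper resolves this by reversing the order: it applies Lemma \ref{lem:avoiding} first, then arranges that all subsequent modifications either take place entirely outside the sub-shell $r_j'\b\setminus r_j\overline\b$ containing $|\Fgot|$ (the arc attachment and the second application of Lemma \ref{lem:AL}), or are uniformly small on all of $\overline\b$ (the final hitting automorphism $\Psi$, which moves points by at most $\mu\delta$). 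Since $\Psi$ may push the curve back into $|\Fgot|$, the length estimate must be made stable in advance: one fixes a margin $\eta''$ such that any path uniformly $\eta''$-close to a path crossing the shell and avoiding $|\Fgot|$ still has length $>1$, and chooses $\delta$ with $(\mu+1)\delta<\eta''$. Your proposal has neither the reordering nor this stability margin (your estimate ``$1-\sum_{k>n}\epsilon_k$'' conflates a sup-norm bound with a length bound). A smaller point: your appeal to ``general position'' to make the extension over a new handle an embedding is not valid in $\c^2$, where self-intersections are stable; the paper instead attaches an explicit embedded arc with embedded image and uses Mergelyan with interpolation, which is the correct elementary mechanism.
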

\begin{proof}
Pick a number $s<s_0'<r$ such that
\begin{equation}\label{eq:r1s0'}
	\psi(\sfb K)\subset r\b\setminus s_0'\overline\b.
\end{equation}
Such an $s_0'$ exists in view of \eqref{eq:th-rs} by compactness of $\psi(\sfb K)$. Without loss of generality we assume that $\Lambda$ is infinite, and hence, since it is closed in $\b$ and discrete, $\Lambda$ is in bijection with $\n$ and for any ordering $\Lambda=\{p_i\}_{i\in\n}$ of $\Lambda$ we have that $\lim_{i\to\infty}|p_i|=1$. Thus, there are sequences of numbers $\{r_j\}_{j\in\n}$, with $r_1=r$, $\{r_j'\}_{j\in\n}$, $\{s_j\}_{j\in\n}$, and $\{s_j'\}_{j\in\n}$, satisfying the following properties:
\begin{enumerate}[\rm (a)]
\item $s_{j-1}'<r_j<r_j'<s_j<s_j'$ for all $j\in\n$.
\item $\lim_{j\to\infty} r_j=1$.
\item $\Lambda\subset s\b\cup \big(\bigcup_{j\in\n} (s_j'\b\setminus s_j\overline \b)\big)$. Take into account \eqref{eq:th-Lambda}.
\end{enumerate}
Write $\Lambda_0:=\Lambda\cap s\b=\Lambda\cap s_0'\b$ and $\Lambda_j=\Lambda\cap (s_j'\b\setminus s_j\overline \b)$, $j\in\n$, and observe that $\Lambda_j$ is finite for all $j\in\z_+$. We also assume without loss of generality that $\Lambda_j\neq\emptyset$ for all $j\in\z_+$. By {\rm (a)} and {\rm (c)} we have that 
\begin{equation}\label{eq:Lambdaj}
	\Lambda=\bigcup_{j\in\z_+}\Lambda_j,\quad \Lambda_i\cap\Lambda_j=\emptyset\text{ for all $i, j\in\z_+$, $i\neq j$}.
\end{equation}

Let $\{\epsilon_j\}_{j\in\n}\searrow 0$ be a decreasing sequence of positive numbers such that 
\begin{equation}\label{eq:<epsilon}
	\sum_{j\in\n}\epsilon_j<\epsilon.
\end{equation}
The precise values of the numbers $\epsilon_j$, $j\in\n$, will be specified later.

Write $M_0:=K$ and let
\begin{equation}\label{eq:Mj}
	M_0\Subset M_1\Subset M_2\Subset\ldots\Subset M=\bigcup_{j\in\z_+} M_j
\end{equation}
be an exhaustion of $M$ by connected, smoothly bounded, Runge compact domains such that the Euler characteristic $\chi(M_j\setminus\mathring M_{j-1})\in\{0,-1\}$ for all $j\in\n$. Such exists by basic topological arguments (see e.g.\ \cite[Lemma 4.2]{AlarconLopez2013JGA} for a simple proof).

Write $D_0:=M_0=K$, denote by $\iota_0\colon D_0\to M_0$ the identity map, and set $\psi_0:=\psi$. Fix a point $p_0\in\mathring K$. We shall recursively construct a sequence $\Xi_j=\{D_j,\psi_j\}$, $j\in\n$, where $D_j$ is a connected, smoothly bounded, Runge compact domain in $M$ and $\psi_j\colon D_j\hra \c^2$ is an embedding of class $\Ascr^1(D_j)$ such that the following hold for all $j\in\n$.
\begin{enumerate}[\rm (1$_j$)]
\item $D_{j-1}\Subset D_j\Subset M_j$ and $D_j$ is a strong deformation retract of $M_j$.
\item $|\psi_j(p)-\psi_{j-1}(p)|<\epsilon_j$ for all $p\in D_{j-1}$.
\item $\dist_{\psi_j}(p_0,\sfb D_j)>j$.
\item $\psi_j(\sfb D_j)\subset r_{j+1}\b\setminus s_j'\overline\b$.
\item $\psi_j(D_j\setminus \mathring D_{j-1})\cap s_{j-1}'\overline\b=\emptyset$.
\item $\Lambda_i\subset\psi_j(\mathring D_i)$ for all $i\in\{0,\ldots,j\}$.
\end{enumerate}

Assume for a moment that we have already constructed a sequence $\{\Xi_j\}_{j\in\n}$ enjoying the above conditions. By {\rm (1$_j$)} and \eqref{eq:Mj}, we obtain that
\[
	D:=\bigcup_{j\in\z_+} D_j
\]
is a domain in $M$ satisfying property {\rm (i)} in the statement of the theorem. We claim that, if the number $\epsilon_j>0$ is chosen small enough at each step in the recursive construction, the sequence $\{\psi_j\}_{j\in\z_+}$ converges uniformly on compacta in $D$ to a limit map 
\[
	\wt\psi=\lim_{j\to\infty}\psi_j\colon D\to\c^2
\]
satisfying the conclusion of the theorem. Indeed, by {\rm (2$_j$)} and \eqref{eq:<epsilon} the limit map $\wt \psi$ exists and satisfies {\rm (ii)}; recall that $K=D_0$.  Since each map $\psi_j$ is an embedding of class $\Ascr^1(D_j)$, choosing the number $\epsilon_j>0$ sufficiently small at each step in the recursive construction, then $\wt\psi\colon D\to\c^2$ is an injective holomorphic immersion. Moreover, {\rm (3$_j$)} ensures that $\wt \psi$ is complete, whereas {\rm (5$_j$)}, {\rm (a)}, and {\rm (b)} guarantee {\rm (iv)} and the facts that $\wt\psi(D)\subset\b$ and that $\wt\psi\colon D\to\b$ is a proper map; recall that $s<s_0'$ and observe that $\lim_{j\to\infty} s_j'=1$. Thus, $\wt\psi\colon D\hra\b$ is a proper embedding. Finally, {\rm (6$_j$)} and \eqref{eq:Lambdaj} imply that $\Lambda\subset\wt\psi(D)$, thereby proving {\rm (iii)}.

To conclude the proof it therefore suffices to construct a sequence $\Xi_j=\{D_j,\psi_j\}$, $j\in\n$, satisfying properties {\rm (1$_j$)}--{\rm (6$_j$)} above. We proceed by induction. The basis is given by the compact domain $D_0$ and the map $\psi_0$; observe that {\rm (3$_0$)} holds since $p_0\in \mathring D_0$ and $\psi_0$ is an immersion, {\rm (4$_0$)}$=$\eqref{eq:r1s0'} (recall that $r_1=r$), {\rm (6$_0$)} is implied by {\rm (c)} and \eqref{eq:th-Lambda}, and {\rm (2$_0$)}, {\rm (5$_0$)}, and the first part of {\rm (1$_0$)},  are vacuous conditions. Finally, the second part of {\rm (1$_0$)} is obvious since $D_0=M_0$.

For the inductive step, assume that we have $\Xi_{j-1}=\{D_{j-1},\psi_{j-1}\}$ enjoying the desired properties for some $j\in\n$ and let us construct $\Xi_j=\{D_j,\psi_j\}$. We distinguish cases.

\smallskip

\noindent{\em Case 1: Assume that the Euler characteristic $\chi(M_j\setminus\mathring M_{j-1})=0$.} In this case $M_{j-1}$ is a strong deformation retract of $M_j$. 

Write $\Lambda'=\bigcup_{i=0}^j\Lambda_i$ and let $\eta>0$ and $\mu>0$ be the numbers given by Lemma \ref{lem:hiting} applied to the finite subset $\Lambda'\subset\b$. 
By {\rm (3$_{j-1}$)} and {\rm (4$_{j-1}$)} there is another number $\eta'>0$ with the following property:
\begin{enumerate}[\rm ({A}1)]
\item If $\phi\colon D_{j-1}\to\c^2$ is an immersion of class $\Ascr^1(D_{j-1})$ such that $|\phi(p)-\psi_{j-1}(p)|<\eta'$ for all $p\in D_{j-1}$, then $\dist_\phi(p_0,\sfb D_{j-1})>j-1$ and $\phi(\sfb D_{j-1})\subset r_j\b\setminus s_{j-1}'\overline\b$.
\end{enumerate}

On the other hand, Lemma \ref{lem:building} gives a finite tidy collection $\Fgot$ of tangent balls in $\b$ satisfying the following conditions:
\begin{enumerate}[\rm ({A}1)]
\item[\rm ({A}2)] $|\Fgot|\subset r_j'\b\setminus r_j\overline\b$.
\item[\rm ({A}3)] The length of any path $\gamma\colon [0,1]\to r_j'\overline\b\setminus ((r_j\b)\cup |\Fgot|)$ with $|\gamma(0)|=r_j$ and $|\gamma(1)|=r_j'$ is greater than $2$.
\end{enumerate}
Thus, there is a third number $\eta''>0$ enjoying the following property:
\begin{enumerate}[\rm ({A}1)]
\item[\rm ({A}4)] If $\alpha\colon [0,1]\to r_j'\overline\b\setminus r_j\b$ is a path satisfying that there is another path $\gamma\colon [0,1]\to r_j'\overline\b\setminus ((r_j\b)\cup |\Fgot|)$ such that $|\gamma(0)|=r_j$, $|\gamma(1)|=r_j'$, and $|\gamma(x)-\alpha(x)|<\eta''$ for all $x\in[0,1]$, then the length of $\alpha$ is greater than $1$.
\end{enumerate}

Next, pick a number $t$ such that $s_{j-1}'<t<r_j$ and
\begin{equation}\label{eq:t}
	\psi_{j-1}(\sfb D_{j-1})\subset r_j\b\setminus t\overline\b;
\end{equation}
existence of such number $t$ is ensured by {\rm (a)} and {\rm (4$_{j-1}$)}.
Finally, choose a number
\begin{equation}\label{eq:delta<eta}
	0<\delta<\min\Big\{\eta ,\frac{\eta'}{\mu+1},\frac{\eta''}{\mu+1},\frac{\epsilon_j}{\mu+1},\frac{t-s_{j-1}'}{\mu},\frac{r_{j+1}-s_j'}{2\mu}\Big\}.
\end{equation}

Also fix a number $\tau_1>0$ which will be specified later. 

Taking into account \eqref{eq:t}, Lemma \ref{lem:AL} furnishes an open domain $\Omega\Subset \mathring M_j$ and a proper holomorphic embedding $\phi_1\colon \Omega\hra\c^2$ such that the following hold.
\begin{enumerate}[\rm ({B}1)]
\item $D_{j-1}\subset \Omega$ and $\Omega$ is a deformation retract of $\mathring M_j$ and homeomorphic to $\mathring M_j$. In particular, the second part of {\rm (1$_{j-1}$)} and the fact that $M_{j-1}$ is a strong deformation retract of $M_j$ ensure that $\Omega\setminus D_{j-1}$ consists of a finite collection of pairwise disjoint open annuli.
\item $|\phi_1(p)-\psi_{j-1}(p)|<\tau_1$ for all $p\in D_{j-1}$.
\item $\phi_1(\Omega\setminus \mathring D_{j-1})\cap t\overline\b=\emptyset$.
\end{enumerate}
In view of \eqref{eq:t} and {\rm (B2)}, and choosing $\tau_1>0$ small enough, we also have that
\begin{enumerate}[\rm ({B}1)]
\item [\rm ({B}4)] $\phi_1(\sfb D_{j-1})\subset r_j\b\setminus t\overline\b$.
\end{enumerate} 

Now, given a number $\tau_2>0$ which will be specified later and taking into account {\rm (A2)}, Lemma \ref{lem:avoiding} provides a holomorphic automorphism $\Phi\colon\c^2\to\c^2$ such that:
\begin{enumerate}[\rm ({C}1)]
\item $\Phi(\phi_1(\Omega))\cap |\Fgot|=\emptyset$.
\item $|\Phi(\zeta)-\zeta|<\tau_2$ for all $\zeta\in r_j\overline\b$.
\end{enumerate}
Write 
\begin{equation}\label{eq:phi2}
\phi_2:=\Phi\circ\phi_1\colon\Omega\hra\c^2.
\end{equation}
By {\rm (B1)}, {\rm (B3)}, {\rm (B4)}, {\rm (C2)}, and {\rm (a)}, and choosing $\tau_2>0$ sufficiently small, we have that
\begin{enumerate}[\rm ({D}1)]
\item $\phi_2(\sfb D_{j-1})\subset r_j\b\setminus t\overline\b$,
\item $\phi_2(\Omega\setminus \mathring D_{j-1})\cap t\overline\b=\emptyset$,
\end{enumerate}
and, taking into account {\rm (B1)}, {\rm (D1)}, and the Maximum Principle for the function $|\phi_2|$, there exists a smoothly bounded compact domain $\Upsilon \subset\Omega$ such that: 
\begin{enumerate}[\rm ({D}1)]
\item[\rm ({D}3)] $D_{j-1}\Subset \Upsilon $ and $D_{j-1}$ is a strong deformation retract of $\Upsilon $.
\item[\rm ({D}4)] $\phi_2(\sfb \Upsilon )\subset \sfb (t'\b)$ and meets transversely there for some number $t'$ with $r_j'<t'<s_j$.
\end{enumerate}

Next, choose a point $q_0\in \sfb \Upsilon $ and a smooth embedded compact path $\lambda\subset \c^2\setminus t'  \b$ having $\phi_2(q_0)$ as an endpoint, meeting $\sfb (t'  \b)$ transversely there, and being otherwise disjoint from $t'  \overline\b$. Assume also that
\begin{equation}\label{eq:hitting1}
	\Lambda_j\subset \lambda\subset s_j'\b\setminus t'  \b.
\end{equation} 
Also take a smooth embedded compact path $\gamma\subset\Omega\setminus\mathring\Upsilon $ having $q_0$ as an endpoint, meeting $\sfb \Upsilon $ transversely there, and being otherwise disjoint from $\Upsilon $. Extend $\phi_2$, with the same name, to a smooth embedding $\Upsilon \cup\gamma\hra\c^2$ such that 
\begin{equation}\label{eq:phi2lambda}
	\phi_2(\gamma)=\lambda.
\end{equation}
Observe that $\Upsilon\cup\gamma$ is a Runge compact subset of $\Omega$; take into account {\rm (B1)} and {\rm (D3)}.
Thus, given $\tau_3>0$ to be specified later, Mergelyan's theorem applied to $\phi_2\colon \Upsilon \cup\gamma\hra\c^2$ ensures the existence of a smoothly bounded compact domain $\Upsilon' \subset\Omega$ and an embedding $\phi_3\colon \Upsilon' \hra\c^2$ of class $\Ascr^1(\Upsilon' )$ such that:
\begin{enumerate}[\rm ({E}1)]
\item $\Upsilon \cup\gamma\subset\mathring\Upsilon' $ and $\Upsilon' $ is a strong deformation retract of $M_j$. In particular, $D_{j-1}$ is a strong deformation retract of $\Upsilon'$; see {\rm (D3)}
\item $|\phi_3(p)-\phi_2(p)|<\tau_3$ for all $p\in\Upsilon \cup\gamma$.
\end{enumerate}
Furthermore, if we take $\Upsilon' $ close enough to $\Upsilon \cup\gamma$ and if $\tau_3>0$ is chosen sufficiently small, we obtain in view of {\rm (E2)} that:
\begin{enumerate}[\rm ({E}1)]
\item[\rm (E3)] $\phi_3(\Upsilon' \setminus\mathring\Upsilon )\subset s_j'\b\setminus r_j'\overline\b$. See {\rm (D4)}, {\rm (a)}, \eqref{eq:hitting1}, and \eqref{eq:phi2lambda}.
\item[\rm (E4)] $\phi_3(\sfb D_{j-1})\subset r_j\b\setminus t\overline\b$. See {\rm (D1)} and {\rm (D3)}.
\item[\rm (E5)] $\phi_3(\Upsilon' \setminus\mathring D_{j-1})\cap t\overline\b=\emptyset$. See {\rm (D2)}.
\item[\rm (E6)] $\phi_3(\Upsilon' )\cap|\Fgot|=\emptyset$. See \eqref{eq:phi2}, {\rm (C1)}, {\rm (E3)}, and {\rm (A2)}.
\end{enumerate}

Given $\tau_4>0$ to be specified later, applying Lemma \ref{lem:AL} once again, we obtain, in view of {\rm (E3)}, a domain $\Omega'\Subset M_j$ and a proper holomorphic embedding $\phi_4\colon \Omega'\hra\c^2$ such that:
\begin{enumerate}[\rm ({F}1)]
\item $\Upsilon' \subset\Omega'$ and $\Omega'$ is a deformation retract of $\mathring M_j$ and homeomorphic to $\mathring M_j$. In particular, $\Omega'\setminus \Upsilon' $ consists of a finite collection of pairwise disjoint open annuli. (Cf.\ {\rm (B1)}.)
\item $|\phi_4(p)-\phi_3(p)|<\tau_4$ for all $p\in\Upsilon' $.
\item $\phi_4(\Omega'\setminus\mathring \Upsilon' )\cap r_j'\overline\b=\emptyset$.
\end{enumerate}
If $\tau_4>0$ is chosen sufficiently small then, in view of {\rm (F2)}, we also have:
\begin{enumerate}[\rm ({F}1)]
\item[\rm (F4)] $\phi_4(\sfb D_{j-1})\subset r_j\b\setminus t\overline\b$. See {\rm (E4)}.
\item[\rm (F5)] $\phi_4(\Omega'\setminus\mathring D_{j-1})\cap t\overline\b=\emptyset$. See {\rm (F3)} and {\rm (E5)}, and recall that $t<r_j<r_j'$.
\item[\rm (F6)] $\phi_4(\Omega')\cap|\Fgot|=\emptyset$. See {\rm (E6)}, {\rm (F3)}, and {\rm (A2)}.
\end{enumerate}

Assume now that the numbers $\tau_i$, $i=1,\ldots,4$, are chosen small enough so that $\sum_{i=1}^4\tau_i<\delta$, where $\delta>0$ is the number in \eqref{eq:delta<eta}. Thus, by {\rm (B2)}, {\rm (C2)}, \eqref{eq:phi2}, {\rm (E2)}, and {\rm (F2)}, we have:
\begin{enumerate}[\rm ({F}1)]
\item[\rm (F7)] $|\phi_4(p)-\psi_{j-1}(p)|<\sum_{i=1}^4\tau_i<\delta$ for all $p\in D_{j-1}$.
\item[\rm (F8)] $|\phi_4(p)-\phi_2(p)|<\tau_3+\tau_4<\sum_{i=1}^4\tau_i<\delta$ for all $p\in \Upsilon \cup\gamma$.
\end{enumerate}

By {\rm (6$_{j-1}$)}, {\rm (c)}, \eqref{eq:hitting1}, and \eqref{eq:phi2lambda}, we infer that $\Lambda_i\subset \psi_{j-1}(\mathring D_i)$, $i=0,\ldots,j-1$, and $\Lambda_j\subset\phi_2(\gamma)$. Thus, {\rm (F7)} and {\rm (F8)} guarantee the existence of an injective map $\varphi\colon\Lambda'\to \phi_4(\Omega')\subset\c^2$ such that:
\begin{enumerate}[\rm ({G}1)]
\item	$|\varphi(\zeta)-\zeta|<\delta$ for all $\zeta\in\Lambda'$.
\item $\varphi(\Lambda_i)\subset \phi_4(\mathring D_i)$ for all $i\in\{0,\ldots,j-1\}$.
\item $\varphi(\Lambda_j)\subset \phi_4(\gamma)\subset \phi_4(\mathring \Upsilon' \setminus \Upsilon )$.
\end{enumerate}
(Observe that, since $0<\delta<\eta$ where $\eta$ is given by Lemma \ref{lem:hiting} for the subset $\Lambda'\subset\b$ (see \eqref{eq:delta<eta}), every map $\varphi\colon \Lambda'\to\c^2$ satisfying {\rm (G1)} is injective.) In view of \eqref{eq:delta<eta} and {\rm (G1)}, Lemma \ref{lem:hiting} provides a holomorphic automorphism $\Psi\colon\c^2\to\c^2$ such that:
\begin{enumerate}[\rm ({H}1)]
\item $\Psi(\varphi(\zeta))=\zeta$ for all $\zeta\in\Lambda'$.
\item $|\Psi(\zeta)-\zeta|<\mu\delta$ for all $\zeta\in\overline\b$, where $\mu$ is the number given by Lemma \ref{lem:hiting} for the set $\Lambda'$ which appears in \eqref{eq:delta<eta}. 
\end{enumerate}
Consider the proper holomorphic embedding 
\[
	\phi:=\Psi\circ\phi_4\colon \Omega'\hra\c^2.
\]
Properties {\rm (F4)}, {\rm (F7)}, {\rm (H2)}, and \eqref{eq:delta<eta}, together with the Maximum Principle, ensure that
\begin{equation}\label{eq:mu+1}
	|\phi(p)-\psi_{j-1}(p)|<(\mu+1)\delta<\min\{\eta',\eta'',\epsilon_j\}\quad 
	\text{for all $p\in D_{j-1}$}.
\end{equation}
This inequality and {\rm (A1)} guarantee that
\begin{equation}\label{eq:dist}
\dist_\phi(p_0,\sfb D_{j-1})>j-1
\end{equation}
and
\begin{equation}\label{eq:bDj-1}
	\phi(\sfb D_{j-1})\subset r_j\b\setminus s_{j-1}'\overline\b.
\end{equation}
Furthermore, since $\phi_4\colon\Omega'\hra\c^2$ is a proper holomorphic embedding, {\rm (F4)}, {\rm (F1)}, and the Maximum Principle ensure the existence of a smoothly bounded compact domain $D_j\subset \mathring M_j$ satisfying condition {\rm (1$_j$)} and such that
\begin{equation}\label{eq:phi4bDj}
\phi_4(\sfb D_j)\subset (r_{j+1}-\mu\delta)\b\setminus (s_j'+\mu\delta)\overline\b \subset\overline\b;
\end{equation}
observe that $r_{j+1}-\mu\delta>s_j'+\mu\delta>r_j$ by \eqref{eq:delta<eta} and {\rm (a)}. Thus, {\rm (H2)} gives that
\begin{equation}\label{eq:bDj}
\phi(\sfb D_j)\subset r_{j+1}\b\setminus s_j'\overline\b.
\end{equation}
Moreover, \eqref{eq:phi4bDj} and the Maximum Principle imply that $\phi_4(D_j)\subset (r_{j+1}-\mu\delta)\b\subset\overline\b$, and hence {\rm (H2)} gives that
\begin{equation}\label{eq:mu+1'}
	|\phi(p)-\phi_4(p)|<\mu\delta
	\quad \text{for all $p\in D_j$}.
\end{equation}

Set
\[
	\psi_j:=\phi|_{D_j}\colon D_j\to\c^2.
\]
We claim that the pair $\Xi_j=\{D_j,\psi_j\}$ satisfies conditions {\rm (1$_j$)}--{\rm (6$_j$)}.  
Indeed, {\rm (1$_j$)} has been already checked, {\rm (2$_j$)} is implied by \eqref{eq:mu+1}, and {\rm (4$_j$)}=\eqref{eq:bDj}. On the other hand, since $\phi_4(D_j)\subset\b$, properties {\rm (F5)} and {\rm (H2)} ensure that $\emptyset=\psi_j(D_j\setminus\mathring D_{j-1})\cap (t-\mu\delta)\overline\b\supset \psi_j(D_j\setminus\mathring D_{j-1})\cap s_{j-1}'\overline\b$, where the latter inclusion follows from the fact that $s_{j-1}'<t-\mu\delta$ (see \eqref{eq:delta<eta}). This proves {\rm (5$_j$)}. Further, {\rm (G2)} and {\rm (H1)}  guarantee that $\Lambda_i\subset\psi_j(\mathring D_i)$ for all $i\in\{0,\ldots,j-1\}$, whereas {\rm (G3)}, {\rm (H1)}, and \eqref{eq:bDj} ensure that $\Lambda_j\subset\psi_j(\mathring D_j)$; thereby proving {\rm (6$_j$)}. Finally, \eqref{eq:bDj-1}, \eqref{eq:bDj}, \eqref{eq:mu+1'}, {\rm (F6)}, and {\rm (A4)} guarantee that $\dist_{\psi_j}(\sfb D_{j-1},\sfb D_j)>1$; take into account that $s_j'>r_j'$ in view of {\rm (a)}. This and \eqref{eq:dist} prove {\rm (3$_j$)}; recall that $p_0\in\mathring D_{j-1}\Subset D_j$.

This concludes the construction of $\Xi_j$ in Case 1.

\smallskip

\noindent{\em Case 2: Assume that the Euler characteristic $\chi(M_j\setminus \mathring M_{j-1})=-1$.} In this case there is a smooth embedded compact path $\gamma\subset\mathring M_j\setminus\mathring D_{j-1}$ with its two endpoints lying in $\sfb D_{j-1}$, meeting transversely there, and otherwise disjoint from $D_{j-1}$, such that $D_{j-1}\cup\gamma$ is a strong deformation retract of $M_j$. 

By {\rm (4$_{j-1}$)} we may extend $\psi_{j-1}$, with the same name, to a smooth embedding $D_{j-1}\cup\gamma\hra\c^2$ such that $\psi_{j-1}(\gamma)\subset r_j\b\setminus s_{j-1}'\overline\b$. Thus, in view of properties {\rm (1$_{j-1}$)}--{\rm (6$_{j-1}$)}, Mergelyan theorem with interpolation applied to $\psi_{j-1}\colon D_{j-1}\cup\gamma\hra\c^2$ guarantees the existence of a connected, smoothly bounded, compact domain $D_{j-1}'\subset \mathring M_j$ and an embedding $\psi_{j-1}'\colon D_{j-1}'\hra\c^2$ of class $\Ascr^1(D_{j-1}')$ with the following properties:
\begin{enumerate}[\rm (1$_{j-1}'$)]
\item $D_{j-1}\Subset D_{j-1}'\Subset M_j$ and $D_{j-1}'$ is a strong deformation retract of $M_j$.
\item $|\psi_{j-1}'(p)-\psi_{j-1}(p)|<\epsilon_j/2$ for all $p\in D_{j-1}$.
\item $\dist_{\psi_{j-1}'}(p_0,\sfb D_{j-1}')>j-1$.
\item $\psi_{j-1}'(\sfb D_{j-1}')\subset r_j\b\setminus s_{j-1}'\overline\b$.
\item $\psi_{j-1}'(D_{j-1}'\setminus \mathring D_{j-1})\cap s_{j-1}'\overline\b=\emptyset$.
\item $\Lambda_i\subset \psi_{j-1}'(\mathring D_i)$ for all $i\in\{0,\ldots,j-1\}$. 
\end{enumerate}
Since $\chi(M_j\setminus \mathring D_{j-1}')=0$, this reduces the proof to Case 1 and hence concludes the construction of the sequence $\Xi_j=\{D_j,\psi_j\}$ meeting properties {\rm (1$_j$)}--{\rm (6$_j$)}, $j\in\n$.
 
This completes the proof of Theorem \ref{th:main}.
\end{proof}

To finish the paper we show how Theorem \ref{th:main} implies Theorem \ref{th:main-intro}.
%
%
\begin{proof}[Proof of Theorem \ref{th:main-intro}]
Let $\Lambda$ and $M$ be as in Theorem \ref{th:main-intro} and assume without loss of generality that $0\in\Lambda$. By Alarc\'on and L\'opez \cite[Main Theorem, page 1795]{AlarconLopez2013JGA}, there exists a complex structure on $M$ such that the open Riemann surface $M$ admits a proper holomorphic embedding $\phi\colon M\hra\c^2$; up to composing with a translation we may assume that $0\in\phi(M)$. Take a number $0<r<1$ such that $\Lambda\cap r\overline\b=\{0\}$; since $\Lambda\subset\b$ is closed and discrete, every small enough $r>0$ meets this requirement. Thus, there are a small number $0<s<r$ and a smoothly bounded compact disk $K\subset M$ such that $\phi(\sfb K)\subset r\b\setminus s\overline\b$ and $0\in\phi(\mathring K)$. 

Now, Theorem \ref{th:main} applied to $\psi:=\phi|_K$ furnishes a domain $D\subset M$ homeomorphic to $M$ and a complete proper holomorphic embedding $\wt\psi\colon D\hra \b$ with $\Lambda\subset \wt\psi(D)$. Since $D$ is homeomorphic to the smooth surface $M$, there exists a complex structure on $M$ (possibly different from the one used in the previous paragraph) such that the open Riemann surface $M$ is biholomorphic to $D$. This concludes the proof.
\end{proof}


\subsection*{Acknowledgements}
We thank Francisco J.\ L\'opez for helpful discussions about the paper.

A.\ Alarc\'on is supported by the Ram\'on y Cajal program of the Spanish Ministry of Economy and Competitiveness and partially supported by the MINECO/FEDER grant no. MTM2014-52368-P, Spain.



\begin{thebibliography}{10}

\bibitem{AlarconDrinovecForstnericLopez2015PLMS}
A.~Alarc{\'o}n, B.~Drinovec~Drnov{\v{s}}ek, F.~Forstneri{\v{c}}, and F.~J.
  L{\'o}pez.
\newblock Every bordered {R}iemann surface is a complete conformal minimal
  surface bounded by {J}ordan curves.
\newblock {\em Proc. Lond. Math. Soc. (3)}, 111(4):851--886, 2015.

\bibitem{AlarconForstneric2013MA}
A.~Alarc{\'o}n and F.~Forstneri{\v{c}}.
\newblock Every bordered {R}iemann surface is a complete proper curve in a
  ball.
\newblock {\em Math. Ann.}, 357(3):1049--1070, 2013.

\bibitem{AlarconGlobevnikLopez2016Crelle}
A.~Alarc{\'o}n, J.~Globevnik, and F.~J. L{\'o}pez.
\newblock {A construction of complete complex hypersurfaces in the ball with
  control on the topology}.
\newblock {\em J. Reine Angew. Math.}, in press.

\bibitem{AlarconLopez2013MA}
A.~Alarc{\'o}n and F.~J. L{\'o}pez.
\newblock Null curves in $\mathbb{C}^3$ and {C}alabi-{Y}au conjectures.
\newblock {\em Math. Ann.}, 355(2):429--455, 2013.

\bibitem{AlarconLopez2013JGA}
A.~Alarc{\'o}n and F.~J. L{\'o}pez.
\newblock Proper holomorphic embeddings of {R}iemann surfaces with arbitrary
  topology into {$\mathbb{C}^2$}.
\newblock {\em J. Geom. Anal.}, 23(4):1794--1805, 2013.

\bibitem{AlarconLopez2016JEMS}
A.~Alarc{\'o}n and F.~J. L{\'o}pez.
\newblock Complete bounded embedded complex curves in {$\mathbb{C}^2$}.
\newblock {\em J. Eur. Math. Soc. (JEMS)}, 18(8):1675--1705, 2016.

\bibitem{BellNarasimhan1990book}
S.~R. Bell and R.~Narasimhan.
\newblock Proper holomorphic mappings of complex spaces.
\newblock In {\em Several complex variables, {VI}}, volume~69 of {\em
  Encyclopaedia Math. Sci.}, pages 1--38. Springer, Berlin, 1990.

\bibitem{Bishop1958PJM}
E.~Bishop.
\newblock Subalgebras of functions on a {R}iemann surface.
\newblock {\em Pacific J. Math.}, 8:29--50, 1958.

\bibitem{Drinovec2015JMAA}
B.~Drinovec~Drnov{\v{s}}ek.
\newblock Complete proper holomorphic embeddings of strictly pseudoconvex
  domains into balls.
\newblock {\em J. Math. Anal. Appl.}, 431(2):705--713, 2015.

\bibitem{Forstneric2011book}
F.~Forstneri{\v{c}}.
\newblock {\em Stein manifolds and holomorphic mappings}, volume~56 of {\em
  Ergebnisse der Mathematik und ihrer Grenzgebiete. 3. Folge. A Series of
  Modern Surveys in Mathematics [Results in Mathematics and Related Areas. 3rd
  Series. A Series of Modern Surveys in Mathematics]}.
\newblock Springer, Heidelberg, 2011.
\newblock The homotopy principle in complex analysis.

\bibitem{ForstnericGlobevnikStensones1996MA}
F.~Forstneri\v{c}, J.~Globevnik, and B.~Stens{\o}nes.
\newblock Embedding holomorphic discs through discrete sets.
\newblock {\em Math. Ann.}, 305(3):559--569, 1996.

\bibitem{ForstnericWold2009JMPA}
F.~Forstneri\v{c} and E.~F. Wold.
\newblock Bordered {R}iemann surfaces in {$\mathbb{C}^2$}.
\newblock {\em J. Math. Pures Appl. (9)}, 91(1):100--114, 2009.

\bibitem{ForstnericWold2013APDE}
F.~Forstneri\v{c} and E.~F. Wold.
\newblock Embeddings of infinitely connected planar domains into {${\mathbb
  C}^2$}.
\newblock {\em Anal. PDE}, 6(2):499--514, 2013.

\bibitem{Globevnik2015AM}
J.~Globevnik.
\newblock A complete complex hypersurface in the ball of {$\Bbb C^N$}.
\newblock {\em Ann. of Math. (2)}, 182(3):1067--1091, 2015.

\bibitem{Globevnik2016JMAA}
J.~Globevnik.
\newblock Embedding complete holomorphic discs through discrete sets.
\newblock {\em J. Math. Anal. Appl.}, 444(2):827--838, 2016.

\bibitem{Globevnik2016MA}
J.~Globevnik.
\newblock Holomorphic functions unbounded on curves of finite length.
\newblock {\em Math. Ann.}, 364(3-4):1343--1359, 2016.

\bibitem{Jones1979PAMS}
P.~W. Jones.
\newblock A complete bounded complex submanifold of {${\bf C}^{3}$}.
\newblock {\em Proc. Amer. Math. Soc.}, 76(2):305--306, 1979.

\bibitem{MartinUmeharaYamada2009PAMS}
F.~Mart\'in, M.~Umehara, and K.~Yamada.
\newblock Complete bounded holomorphic curves immersed in {$\mathbb{C}^2$} with
  arbitrary genus.
\newblock {\em Proc. Amer. Math. Soc.}, 137(10):3437--3450, 2009.

\bibitem{Mergelyan1951DAN}
S.~N. Mergelyan.
\newblock On the representation of functions by series of polynomials on closed
  sets.
\newblock {\em Doklady Akad. Nauk SSSR (N.S.)}, 78:405--408, 1951.

\bibitem{Runge1885AM}
C.~Runge.
\newblock Zur {T}heorie der {A}nalytischen {F}unctionen.
\newblock {\em Acta Math.}, 6(1):245--248, 1885.

\bibitem{Yang1977}
P.~Yang.
\newblock Curvature of complex submanifolds of {$C^{n}$}.
\newblock In {\em Several complex variables ({P}roc. {S}ympos. {P}ure {M}ath.,
  {V}ol. {XXX}, {P}art 2, {W}illiams {C}oll., {W}illiamstown, {M}ass., 1975)},
  pages 135--137. Amer. Math. Soc., Providence, R.I., 1977.

\bibitem{Yang1977JDG}
P.~Yang.
\newblock Curvatures of complex submanifolds of {${\bf C}^{n}$}.
\newblock {\em J. Differential Geom.}, 12(4):499--511 (1978), 1977.

\end{thebibliography}

\def\cprime{$'$}


\vspace*{0.3cm}
\noindent Antonio Alarc\'{o}n

\noindent Departamento de Geometr\'{\i}a y Topolog\'{\i}a e Instituto de Matem\'aticas (IEMath-GR), Universidad de Granada, Campus de Fuentenueva s/n, E--18071 Granada, Spain.

\noindent  e-mail: {\tt alarcon@ugr.es}

\vspace*{0.3cm}

\noindent Josip Globevnik

\noindent Department of Mathematics, University of Ljubljana, and Institute
of Mathematics, Physics and Mechanics, Jadranska 19, SI--1000 Ljubljana, Slovenia.

\noindent e-mail: {\tt josip.globevnik@fmf.uni-lj.si}

\end{document}